\newcommand{\R}{\mathbb{R}}
\newcommand{\N}{\mathbb{N}}
\newcommand{\B}{\mathcal{B}}
\DeclareDocumentCommand\dd{ o g d() }{
	\IfNoValueTF{#2}{
		\IfNoValueTF{#3}
			{\mathrm{d}\IfNoValueTF{#1}{}{^{#1}}}
			{\mathinner{\mathrm{d}\IfNoValueTF{#1}{}{^{#1}}\argopen(#3\argclose)}}
		}
		{\mathinner{\mathrm{d}\IfNoValueTF{#1}{}{^{#1}}#2} \IfNoValueTF{#3}{}{(#3)}}
	}
\newcommand{\del}{\partial}
\newcommand{\eps}{\varepsilon}
\newcommand{\sgn}{\operatorname{sgn}}
\newcommand{\vcc}{\vcentcolon}
\DeclarePairedDelimiter\abs{\lvert}{\rvert}
\DeclarePairedDelimiter\norm{\Vert}{\rVert}
\theoremstyle{plain}
\newtheorem{theorem}{Theorem}[section]
\newtheorem{lemma}[theorem]{Lemma}
\newtheorem{proposition}[theorem]{Proposition}
\theoremstyle{definition}
\newtheorem{definition}[theorem]{Definition}
\theoremstyle{remark}
\newtheorem{remark}[theorem]{Remark}
\numberwithin{equation}{section}
\title{Uniqueness of measure solutions for multi-component coagulation equations}
 \author{Sebastian Throm \thanks{\texttt{sebastian.throm@umu.se}}}
 \affil{\em Ume{\aa} University,  Department of Mathematics and Mathematical Statistics, 901 87 Umeå }
\date{}
\begin{document}
 \maketitle
 
 \begin{abstract}
  We prove uniqueness of measure solutions for a multi-component version of Smoluchowski's coagulation equation. The result is valid for a broad range of coagulation kernels and allows to include a source term. The classical coagulation equation is also covered as a special case.
 \end{abstract}
 
 \section{Introduction}
 
 Smoluchowski's classical equation is a model to describe an infinitely large system of particles which interact by merging upon collision. More precisely, the equation is given by
 \begin{equation}\label{eq:Smol:classic}
  \del_{t}f(t,x)=\frac{1}{2}\int_{0}^{x}K(x-y,y)f(t,x-y)f(t,y)\dd{y}-f(t,x)\int_{0}^{\infty}K(x,y)f(t,y)\dd{y}
 \end{equation}
where $f(t,x)$ denotes the density of clusters of size/volume $x\in (0,\infty)$ and the coagulation kernel $K$ prescribes the rate at which particles merge (see e.g.\@ \cite{LaM04,Lau15}). Recently, a generalisation of \eqref{eq:Smol:classic} attracted interest mainly in the context of athmospheric sciences, where clusters can be formed by different types of particles \cite{OKO13,EKB20}. More precisely, following the notation introduced in \cite{FLN22}, for a natural number $d\geq 1$ we let
\begin{equation*}
 \R_{*}^{d}\vcc=[0,\infty)^{d}\setminus \{(0,\ldots,0)\}
\end{equation*}
and for $x,y\in\R_{*}^{d}$ with $x=(x_1,\ldots,x_d)$ and $y=(y_1,\ldots,y_d)$ we write
\begin{equation*}
 x<y \qquad \text{if} \qquad x_\ell\leq y_\ell \qquad \text{for all } \ell\in\{1,\ldots,d\} \text{ and } x\neq y.
\end{equation*}
Throughout this work we also use the $\ell^1$-norm on $\R_{*}^{d}$, i.e.\@
\begin{equation*}
 \abs{x}=\sum_{k=1}^{d}\abs{x_k}=\sum_{k=1}^{d}x_{k}\qquad \text{for all } x=(x_1,\ldots,x_d)\in \R_{*}^{d}.
\end{equation*}
We thus consider a particle system, where a cluster of size $x=(x_1,\ldots,x_d)\in\R_{*}^{d}$ is formed by $d$ different components with respective volumes $x_1,\ldots,x_d$. The corresponding generalisation of \eqref{eq:Smol:classic} then reads
 \begin{equation}\label{eq:mult:coag}
  \del_{t}f(t,x)=\frac{1}{2}\int_{0<y<x}K(x-y,y)f(t,x-y)f(t,y)\dd{y}-f(t,x)\int_{\R_{*}^{d}}K(x,y)f(t,y)\dd{y}=\vcc Q(f,f) 
 \end{equation}
 where now $K\colon \R_{*}^{d}\times\R_{*}^{d}\to [0,\infty)$ and $f(t,x)$ denotes the density of clusters of size $x\in\R_{*}^{d}$.
 Since the mathematical treatment is very similar, we include additionally a size dependent source term $\zeta\geq 0$ and study the equation
 \begin{equation}\label{eq:mult:coag:source}
  \del_{t}f(t,x)= Q(f,f)(t,x) +\zeta(x).
 \end{equation}
An important property of \eqref{eq:Smol:classic} is the (formal) conservation of the \emph{total mass}, i.e.\@ 
 \begin{equation}\label{eq:mass:conservation:classic}
 \int_{0}^{\infty}xf(t,x)\dd{x}= \int_{0}^{\infty}xf(0,x)\dd{x} \qquad \text{for all } t\geq 0. 
 \end{equation}
 This property appears natural from the underlying particle system, where particles can only merge but mass is neither created nor destroyed. However, for coagulation kernels with a superlinear growth at infinity, a phenomenon known as \emph{gelation} can be observed \cite{EMP02}, where in fact, mass is lost from the system in finite time. This effect is interpreted by the formation of clusters of infinite size due to the rapid growth of the kernel. However, in this work, we will restrict to the case of mass-conserving systems. The corresponding generalisation of the mass conservation property \eqref{eq:mass:conservation:classic} for the multi-component equation \eqref{eq:mult:coag:source} with source reads
 \begin{equation}\label{eq:mass:conservation}
 \int_{\R_{*}^{d}}xf(t,x)\dd{x}=\int_{\R_{*}^{d}}xf(0,x)\dd{x}+t\int_{\R_{*}^{d}}\zeta(x)\dd{x}  \qquad \text{for all } t\geq 0.
 \end{equation}
 A remarkable feature of \eqref{eq:mult:coag}, which has no counterpart in the classical model \eqref{eq:Smol:classic}, is a \emph{localisation} property in the long-time limit which has been recently shown in \cite{FLN22}. In fact, following the notation used there, we let
 \begin{equation*}
  \theta_0=\frac{\int_{\R_{*}^{d}}xf(0,x)\dd{x}}{\int_{\R_{*}^{d}}\abs{x}f(0,x)\dd{x}}.
 \end{equation*}
It has been proven in \cite{FLN22} that there exists a solution $f$ to \eqref{eq:mult:coag} such that, as $t\to\infty$, the mass distribution $\abs{x}f(t,x)$ localises in the direction of $\theta_0$. More precisely, there exists a function $\delta(t)\to 0$ as $t\to\infty$ such that
\begin{equation}\label{eq:localisation}
 \lim_{t\to \infty}\abs[\bigg]{\int_{\{\delta(t)t^{1/(1-\gamma)}\leq \abs{x}\leq t^{1/(1-\gamma)}/\delta(t)\}\cap\{\abs{x/\abs{x}-\theta_0}\leq \delta(t)\}}\abs{x}f(t,x)\dd{x}-\int_{\R_{*}^{d}}\abs{x}f(0,x)\dd{x}}=0.
\end{equation}
Note that the parameter $\gamma$ arises in the bound on the kernel $K$ while the precise assumptions can be found in \cite[Theorem~1.1]{FLN22}. 
 
The goal of this work is to establish uniqueness of solutions to \eqref{eq:mult:coag:source} which will thus especially prove that the property \eqref{eq:localisation} is a universal feature of this model (at least in the common parameter regime where both statements hold). Note that our result allows also for $\zeta\equiv 0$. In particular, we will also consider measure solutions. This approach has the advantage that it automatically includes the discrete version of \eqref{eq:mult:coag:source} as a particular case by restricting to Dirac measures located at integer values.

Concerning the classical coagulation equation \eqref{eq:Smol:classic}, there are various results in the literature which establish uniqueness of solutions in suitable weighted $L^1$ spaces for different classes of kernels, e.g.\@ \cite{Ste90a,LaM04,EMR05}. However, for measure solutions there are only very few results available. In \cite{Nor99}, existence and uniqueness of measure solutions to \eqref{eq:Smol:classic} is shown relying on the contraction mapping theorem together with a monotonicity argument. The corresponding result includes a broad range of coagulation kernels but, as in the previously mentioned works, requires the boundedness of two different moments of the initial datum. Conversely, in \cite{FoL06a} uniqueness of measure solutions to \eqref{eq:Smol:classic} has been established requiring only the boundedness of one moment for the initial datum. However, the result is restricted to a smaller class of coagulation kernels, and particularly, the important case of Smoluchowski's kernel for Brownian particles, i.e.\@
\begin{equation*}
 K(x,y)=(x^{1/3}+y^{1/3})(x^{-1/3}+y^{-1/3})
\end{equation*}
is not included. In \cite{MeP04} uniqueness of measure solutions has also been established under mild assumptions on the initial datum for the three \emph{solvable kernels} $K(x,y)=2$, $K(x,y)=x+y$ and $K(x,y)=xy$ exploiting explicit formulas for the Laplace transformed equation.

In this work we aim at extending the uniqueness of measure solutions to the multi-component equation \eqref{eq:mult:coag:source} where, to our knowledge, no uniqueness results are so far available. The result we prove here is mainly a generalisation of the uniqueness part in \cite{Nor99} to the multi-component equation \eqref{eq:mult:coag:source} for measures including also a source term. However, our proof is different from the one in \cite{Nor99}. In fact, we combine the approach to prove uniqueness in the $L^1$ setting for the one-component equation with methods developed in \cite{LuM12} for measure solutions for the Boltzmann model.

 \section{Notation, definitions and main results}
 
 \subsection{Notation}
 
 Adapting the notation from \cite{LuM12}, we denote by $\B(\R_*^d)$ the set of Borel measures on $\R_*^d$. Moreover, for $\alpha,\beta\in\R$ we define the weight functions $\omega_{\alpha,\beta}\colon \R_{*}^{d}\to [0,\infty)$ as well as $\overline{\omega}_{\alpha,\beta}\colon (0,\infty)\to [0,\infty)$ through
 \begin{equation}\label{eq:weight}
  \omega_{\alpha,\beta}(x)\vcc=\overline{\omega}_{\alpha,\beta}(\abs{x})\vcc=\begin{cases}
                                \abs{x}^{\alpha} & \text{if } \abs{x}\leq 1\\
                                \abs{x}^{\beta} & \text{if } \abs{x}> 1.
                               \end{cases}
\end{equation}
We then define
\begin{equation*}
 \B_{\alpha,\beta}(\R_*^d)\vcc=\Bigl\{\mu\in\B(\R_{*}^{d})\;|\; \norm{\mu}_{\alpha,\beta}\vcc=\int_{\R_{*}^{d}}\omega_{\alpha,\beta}(x)\dd{\abs{\mu}(x)}<\infty\Bigr\}
\end{equation*}
where $\abs{\mu}$ is the total variation of $\mu$ and by $\B_{\alpha,\beta}^{+}$ we denote the corresponding cone of non-negative measures. Furthermore, we define
\begin{multline*}
 L_{-\alpha,-\beta}^{\infty}(\R_{*}^{d})=\Bigl\{f\in L^{\infty}(\R_{*}^{d})\; | \;\\*
\norm{f}_{L^{\infty}_{-\alpha,-\beta}}\vcc= \sup_{x\in \R_{*}^{d}}\abs{f(x)}\omega_{-\alpha,-\beta}(x)=\sup_{x\in \R_{*}^{d}}\abs{f(x)}(\omega_{\alpha,\beta}(x))^{-1}<\infty\Bigr\}.
\end{multline*}
By duality, it holds
\begin{equation}\label{eq:duality}
 \norm{\mu}_{\alpha,\beta}=\sup\biggl\{\abs[\bigg]{\int_{\R_{*}^{d}}\varphi(x)\dd{\mu(x)}}\;\Big|\;\varphi\in C_{c}^{k}(\R_{*}^{d}) \text{ such that }\norm{\varphi}_{L_{-\alpha,-\beta}^{\infty}}\leq 1\biggr\} \quad \text{for all } k\in\N\cup \{0\}.
\end{equation}
We introduce moreover the following vector space of test functions which is convenient for working with \eqref{eq:mult:coag:source}:
\begin{equation}\label{eq:test:functions}
 \mathcal{T}\vcc=\bigl\{\varphi\in C_{b}^{0,1}(\R_{*}^{d})\; |\; \exists \eps>0: \abs{x+y}<\eps \Rightarrow \varphi(x+y)-\varphi(x)-\varphi(y)=0\bigr\}
\end{equation}
\begin{remark}
 This means that $\mathcal{T}$ is given by the set of bounded Lipschitz functions on $\R_{*}^{d}$ which are linear in a small neighbourhood of zero. In particular $C_{c}^{\infty}(\R_{*}^{d})\subset \mathcal{T}$.
\end{remark}

\subsection{Definitions}

Concerning the coagulation kernel $K\colon \R_{*}^{d}\times \R_{*}^{d}\to [0,\infty)$, we assume that $K$ is continuous and symmetric, i.e.\@ $K(x,y)=K(y,x)$ for all $x,y\in\R_{*}^{d}$ and there exist $\theta_1,\theta_2\in\R$ such that
\begin{equation}\label{eq:est:kernel:2}
  K(x,y)\leq c_{u}\begin{cases}
                  \abs{x}^{-\theta_1}\abs{y}^{\theta_2} & \text{if } \abs{x}\leq \abs{y}\\
                  \abs{x}^{\theta_2}\abs{y}^{-\theta_1} & \text{if } \abs{y}\leq \abs{x},
                 \end{cases}
                 \quad
                 -\theta_1\leq \theta_2, \quad \theta_2<1\quad \text{and}\quad \gamma\vcc=-\theta_1+\theta_2<1.
\end{equation}
\begin{remark}
Obviously, we also have $K(x,y)\leq c_{u}(\abs{x}^{-\theta_1}\abs{y}^{\theta_2}+\abs{x}^{\theta_2}\abs{y}^{-\theta_1})$ which is a more common form found in the mathematical literature. On the other hand, a kernel satisfying the latter estimate also satisfies \eqref{eq:est:kernel:2} upon changing the constant to $2c_u$. We also note that \eqref{eq:est:kernel:2} contains in particular the class of coagulation kernels considered in \cite{FLN22}. 
\end{remark}

In analogy to the Boltzmann equation in \cite{LuM12}, we define in the following a weak and a strong concept of measure solutions to \eqref{eq:mult:coag:source}. First, we define the weak form of $Q$ for measures through
\begin{equation}\label{eq:def:coag:op:weak}
 \langle Q(\mu,\nu),\varphi\rangle\vcc=\frac{1}{2}\int_{(\R_{*}^{d})^2}K(x,y)[\varphi(x+y)-\varphi(x)-\varphi(y)]\dd{\mu(x)}\dd{\nu(y)}.
\end{equation}
We will use the following notion of weak solutions.
\begin{definition}[weak solution]\label{Def:weak:sol}
 Let $K$ satisfy \eqref{eq:est:kernel:2} and let $f_0\in \B^{+}_{-\theta_1,1}(\R_{*}^{d})$ as well as $\zeta\in \B^{+}_{-\theta_1,1}(\R_{*}^{d})$.
 We denote $\{f_t\}_{t\geq 0}\subset \B^{+}_{\min\{1-\theta_1,1\},1}$ a weak solution with initial datum $f_0$ if 
 \begin{enumerate}
  \item[$\diamond$] $\sup_{t\geq 0}\norm{f_t}_{\min\{1-\theta_1,1\},1}<\infty$
  \item[$\diamond$] $\int_{\R_{*}^{d}}x\dd{f_t(x)}=\int_{\R_{*}^{d}}x\dd{f_0(x)}+t\int_{\R_{*}^{d}}x\dd{\zeta(x)}$ for all $t>0$ (\emph{mass conservation})
  \item[$\diamond$] $t\mapsto \langle Q(f_t,f_t),\varphi\rangle\in C([0,\infty))$ for all $\varphi\in\mathcal{T}$
  \item[$\diamond$] $\int_{\R_{*}^{d}}\varphi(x)\dd{f_t(x)}=\int_{\R_{*}^{d}}\varphi(x)\dd{f_0(x)}+\int_{0}^{t}\langle Q(f_s,f_s),\varphi\rangle\dd{s}+t\int_{\R_{*}^{d}}\varphi(x)\dd{\zeta(x)}$ for all $t\geq 0$ and all $\varphi\in \mathcal{T}$.
 \end{enumerate}
\end{definition}

\begin{remark}\label{Rem:weak:sol}
 Due to the continuity of $t\mapsto \langle Q(f_t,f_t),\varphi\rangle$ each weak solution of \eqref{eq:mult:coag:source} satisfies $\frac{\dd}{\dd{t}}\int_{\R_{*}^{d}}\varphi(x)\dd{f_t(x)}=\langle Q(f_t,f_t),\varphi\rangle+\int_{\R_{*}^{d}}\varphi(x)\dd{\zeta(x)}$ for each $\varphi\in \mathcal{T}$.
\end{remark}

The strong form of the gain and loss term of $Q$ are given through Riesz's representation Theorem via
\begin{equation}\label{eq:def:coag:op:strong}
 \begin{split}
  \int_{\R_{*}^{d}}\varphi(x)\dd{Q^{+}(\mu,\nu)(x)}&\vcc=\int_{(\R_{*}^{d})^2}\varphi(x+y)K(x,y)\dd{\mu(x)}\dd{\nu(y)}\\
  \int_{\R_{*}^{d}}\varphi(x)\dd{Q^{-}(\mu,\nu)(x)}&\vcc=\int_{(\R_{*}^{d})^2}\varphi(x)K(x,y)\dd{\mu(x)}\dd{\nu(y)}\\
  Q(\mu,\nu)&\vcc= Q^{+}(\mu,\nu)-Q^{-}(\mu,\nu)
 \end{split}
\end{equation}
for $\varphi\in L_{-\alpha,-\beta}^{\infty}(\R_{*}^{d})\cap C(\R_{*}^{d})$ with $\alpha,\beta\geq 0$.

\begin{remark}
 It will be proven in Lemma~\ref{Lem:cont:coag:op:gen} that $Q\colon \B_{-\theta_1,\beta+\theta_2}(\R_{*}^{d})\times\B_{-\theta_1,\beta+\theta_2}(\R_{*}^{d}) \to \B_{\alpha,\beta}(\R_{*}^{d})$ for $\alpha,\beta\geq 0$. In particular, for strong solutions as given in Definition~\ref{Def:strong:sol} the coagulation operator $Q$ defined via \eqref{eq:def:coag:op:strong} coincides with \eqref{eq:def:coag:op:weak}. By abuse of notation, we thus denote both operators by $Q$.
\end{remark}
We will then use the following notion of strong solution.
\begin{definition}[strong solution]\label{Def:strong:sol}
 Let $K$ satisfy \eqref{eq:est:kernel:2} and let $f_0\in \B^{+}_{-\theta_1,1}(\R_{*}^{d})$ as well as $\zeta\in \B^{+}_{-\theta_1,1}(\R_{*}^{d})$. We denote $\{f_t\}_{t\geq 0}\subset \B_{-\theta_1,1}^{+}(\R_{*}^{d})$ a strong solution with initial datum $f_0$ if 
 \begin{enumerate}
  \item[$\diamond$] $\sup_{t\geq 0}\norm{f_t}_{\min\{1-\theta_1,1\},1}<\infty$
  \item[$\diamond$] $\int_{\R_{*}^{d}}x\dd{f_t(x)}=\int_{\R_{*}^{d}}x\dd{f_0(x)}+t\int_{\R_{*}^{d}}x\dd{\zeta(x)}$ for all $t>0$ (\emph{mass conservation})
  \item[$\diamond$] $t\mapsto f_t\in C([0,\infty),\B_{-\theta_1,1}(\R_{*}^{d}))\cap C^{1}([0,\infty),\B_{\max\{0,-\theta_1\},0}(\R_{*}^{d}))$ satisfies
  \begin{equation*}
   \frac{\dd}{\dd{t}}f_t=Q(f_t,f_t)+\zeta\qquad \text{for all } t\in[0,\infty).
  \end{equation*}
 \end{enumerate}
\end{definition}

\subsection{Main result and outline}

\begin{theorem}\label{Thm:unique}
 Let $K$ satisfy \eqref{eq:est:kernel:2} and let $f_{0},\zeta\in \B_{-2\theta_1,2\theta_2}^{+}(\R_{*}^{d})\cap \B^{+}_{-\theta_1,1}(\R_{*}^{d})$. There exists a unique weak solution to~\eqref{eq:mult:coag:source} with initial condition $f_0$.
\end{theorem}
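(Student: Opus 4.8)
The plan is to treat uniqueness as the substantive point; existence can be obtained by a standard approximation scheme, namely regularising $K$ (cutting it off near the origin and at infinity), solving the resulting well-posed equations in a space of measures, deriving uniform bounds from the mass identity \eqref{eq:mass:conservation} and from propagation of the $\norm{\cdot}_{-2\theta_1,2\theta_2}$--norm, and passing to the limit in the weak formulation \eqref{eq:def:coag:op:weak}. For uniqueness I would take two weak solutions $\{f_t\}_{t\geq0}$ and $\{g_t\}_{t\geq0}$ of \eqref{eq:mult:coag:source} with the same datum $f_0$ and the same source $\zeta$, and set $\mu_t\vcc=f_t-g_t$ and $h_t\vcc=f_t+g_t\geq0$. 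Writing $f_t=\tfrac12(h_t+\mu_t)$ and $g_t=\tfrac12(h_t-\mu_t)$ and using the symmetry of $K$ and of $(x,y)\mapsto\varphi(x+y)-\varphi(x)-\varphi(y)$, one finds that $Q(f_t,f_t)-Q(g_t,g_t)$ equals, in the weak sense, $\tfrac12\int_{(\R_{*}^{d})^{2}}K(x,y)[\varphi(x+y)-\varphi(x)-\varphi(y)]\dd{h_t(x)}\dd{\mu_t(y)}$; hence, by Remark~\ref{Rem:weak:sol} (the $\zeta$--terms cancel), $\mu_0=0$ and
\begin{equation*}
 \frac{\dd}{\dd{t}}\int_{\R_{*}^{d}}\varphi\,\dd{\mu_t}=\frac12\int_{(\R_{*}^{d})^{2}}K(x,y)\bigl[\varphi(x+y)-\varphi(x)-\varphi(y)\bigr]\dd{h_t(x)}\,\dd{\mu_t(y)}\qquad\text{for all }\varphi\in\mathcal{T},
\end{equation*}
so $\mu_t$ solves a \emph{linear} coagulation equation with coefficient $h_t$. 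A propagation-of-moments estimate --- this is where the hypothesis $f_0,\zeta\in\B_{-2\theta_1,2\theta_2}^{+}$ is used --- makes $\Lambda(t)\vcc=\norm{h_t}_{-2\theta_1,2\theta_2}$ finite and locally bounded in $t$, and together with the bound in Definition~\ref{Def:weak:sol} it also yields $\sup_{t\in[0,T]}\norm{\mu_t}_{-\theta_1,\theta_2}<\infty$ for every $T>0$.

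The key point is that the weight $\omega\vcc=\omega_{-\theta_1,\theta_2}$ renders the loss term of this linear equation dissipative. Testing (formally, for the moment) with $\varphi=\omega\,\sgn(\mu_t)$, for which $\int_{\R_{*}^{d}}\varphi\,\dd{\mu_t}=\norm{\mu_t}_{-\theta_1,\theta_2}$ and $\sgn(\mu_t)\dd{\mu_t}=\dd{\abs{\mu_t}}$, the contribution of $-\varphi(y)$ equals $-\tfrac12\int_{(\R_{*}^{d})^{2}}K(x,y)\,\omega(y)\dd{h_t(x)}\,\dd{\abs{\mu_t}(y)}\leq0$, while those of $\varphi(x+y)$ and $-\varphi(x)$ are bounded in absolute value by $\tfrac12\int_{(\R_{*}^{d})^{2}}K(x,y)\,(\omega(x+y)+\omega(x))\dd{h_t(x)}\,\dd{\abs{\mu_t}(y)}$. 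Since $\abs{x+y}=\abs{x}+\abs{y}$ on $\R_{*}^{d}$ and $r\mapsto\overline{\omega}_{-\theta_1,\theta_2}(r)/r$ is non-increasing (using $-\theta_1\leq\theta_2<1$), $\omega$ is subadditive, so $\omega(x+y)+\omega(x)-\omega(y)\leq2\omega(x)$ and therefore
\begin{equation*}
 \frac{\dd}{\dd{t}}\norm{\mu_t}_{-\theta_1,\theta_2}\leq\int_{(\R_{*}^{d})^{2}}K(x,y)\,\omega(x)\dd{h_t(x)}\,\dd{\abs{\mu_t}(y)}.
\end{equation*}
A case distinction in \eqref{eq:est:kernel:2}, according to whether $\abs{x}\leq\abs{y}$ or $\abs{y}<\abs{x}$ and using only $-\theta_1\leq\theta_2$, gives $K(x,y)\,\omega_{-\theta_1,\theta_2}(x)\leq c_u\,\omega_{-2\theta_1,2\theta_2}(x)\,\omega_{-\theta_1,\theta_2}(y)$ for all $x,y\in\R_{*}^{d}$; integrating in $x$ over the partition $\{\abs{x}\leq\abs{y}\}\cup\{\abs{x}>\abs{y}\}$ of $\R_{*}^{d}$ then yields $\int_{\R_{*}^{d}}K(x,y)\,\omega(x)\dd{h_t(x)}\leq c_u\Lambda(t)\,\omega(y)$, whence $\tfrac{\dd}{\dd{t}}\norm{\mu_t}_{-\theta_1,\theta_2}\leq c_u\Lambda(t)\norm{\mu_t}_{-\theta_1,\theta_2}$. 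As $\mu_0=0$ and $\Lambda\in L^{1}_{\mathrm{loc}}([0,\infty))$, Gr\"onwall's lemma forces $\mu_t\equiv0$, i.e.\ $f_t=g_t$ for all $t\geq0$. Observe that $\B_{-2\theta_1,2\theta_2}$ is precisely the space attached to the \emph{square} $\omega_{-\theta_1,\theta_2}^{2}$ of the weight used here, which is the multi-component analogue of the two-moment condition familiar from the one-component theory.

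The step I expect to be the main obstacle is making this computation rigorous, because $\varphi=\omega_{-\theta_1,\theta_2}\,\sgn(\mu_t)$ is neither continuous, nor bounded, nor linear near the origin --- so it does not lie in $\mathcal{T}$ --- and it depends on $t$ through $\sgn(\mu_t)$, so $\int_{\R_{*}^{d}}\varphi\,\dd{\mu_t}$ cannot be differentiated in $t$ in a naive way. Here I would follow the measure framework of \cite{LuM12}: replace $\sgn(\mu_t)$ by Lipschitz functions with values in $[-1,1]$ converging to it pointwise --- chosen, via Lusin's theorem applied to the finite measure $\omega_{-2\theta_1,2\theta_2}\dd{\abs{\mu_t}}$, so that the error introduced in the dissipative term is arbitrarily small --- and truncate $\omega_{-\theta_1,\theta_2}$ near the origin and at infinity to obtain genuine test functions in $\mathcal{T}$ still dominated by a fixed multiple of $\omega_{-\theta_1,\theta_2}$ (possible because $\abs{x}\leq\overline{\omega}_{-\theta_1,\theta_2}(\abs{x})$ for small $\abs{x}$, as $-\theta_1<1$). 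The time dependence is then handled by proving directly the \emph{integrated} bound $\norm{\mu_t}_{-\theta_1,\theta_2}\leq\int_0^t c_u\Lambda(s)\norm{\mu_s}_{-\theta_1,\theta_2}\dd{s}$ rather than its differential form --- by a time-discretisation combined with the above approximations, or, alternatively, by passing through the backward (dual) linear problem --- after which Gr\"onwall's lemma still applies. Throughout, the continuity assumptions of Definition~\ref{Def:weak:sol} and the mapping properties of $Q$ from Lemma~\ref{Lem:cont:coag:op:gen} guarantee that every integral occurring in this limiting procedure is finite and continuous in $t$.
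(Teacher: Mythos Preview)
Your approach is essentially the same as the paper's: take the difference $\mu_t=f_t-g_t$, test (morally) with $\omega_{-\theta_1,\theta_2}\sgn(\mu_t)$, exploit the subadditivity of $\omega_{-\theta_1,\theta_2}$ to reduce the symmetrised integrand to $2\omega_{-\theta_1,\theta_2}(x)K(x,y)$, use $-\theta_1\leq\theta_2$ to get the pointwise bound $K(x,y)\,\omega_{-\theta_1,\theta_2}(x)\lesssim\omega_{-2\theta_1,2\theta_2}(x)\,\omega_{-\theta_1,\theta_2}(y)$, and close by Gr\"onwall once the $\norm{\cdot}_{-2\theta_1,2\theta_2}$--moment of $h_t=f_t+g_t$ is controlled (splitting into the cases $2\theta_2\leq1$ and $2\theta_2>1$, the latter needing a superlinear moment propagation).

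Where you diverge from the paper is only in the justification of the formal test. The paper does \emph{not} approximate $\sgn(\mu_t)$ by Lipschitz functions or time-discretise; instead it first proves that every weak solution is a strong solution in $C([0,\infty),\B_{-\theta_1,1})\cap C^{1}([0,\infty),\B_{\max\{0,-\theta_1\},0})$ (this uses a de la Vall\'ee Poussin--type argument to obtain tightness near the origin when $\theta_1>0$), and then applies \cite[Lemma~5.1]{LuM12} to the \emph{integrated} strong equation, which permits testing directly with the bounded Borel function $\sigma_s\varphi_n$ where $\varphi_n=\min\{n,\omega_{-\theta_1,\theta_2}\}$. This route avoids the Lusin/time-discretisation step entirely; your plan should also work, but the weak-to-strong upgrade is both cleaner and of independent interest.
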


\begin{remark}
The existence of weak measure solutions to \eqref{eq:mult:coag:source} can be proven by compactness arguments analogously to the one-component equation \eqref{eq:Smol:classic} as e.g.\@ in \cite{EsM06}. The approach has been outlined in \cite{FLN22} for $\zeta=0$ using a slightly different notion of weak solution and considering a smaller class of coagulation kernels but the method is well established. We will therefore restrict to the proof of the uniqueness part. 
\end{remark}

\begin{remark}
 Theorem~\ref{Thm:unique} can be directly extended to time dependent source terms. However, to avoid technicalities we restrict to stationary source terms.
\end{remark}

The remaining part of this work is organised as follows: In Section~\ref{Sec:weight} we will collect several estimates on the weight function $\omega_{\alpha,\beta}$ which will be used in Section~\ref{Sec:moments} to derive suitable moment estimates on weak solutions of \eqref{eq:mult:coag:source}. The proof of our main statement, Theorem~\ref{Thm:unique}, is contained in Section~\ref{Sec:uniqueness}. The strategy is similar to the one established in \cite{LuM12} for the Boltzmann equation, i.e.\@ the key step is to show first that weak solutions are actually strong solutions. For the latter, we can then prove uniqueness relying on arguments on the sign decomposition for measures. This last step exploits the specific form of \eqref{eq:def:coag:op:weak} which has also been central in corresponding results on the one-component equation in $L^1$ (e.g.\@ \cite{EMR05,Lau15}). However, the proof given here extends both to the multi-component equation and particularly also to measure solutions.

\section{Estimates on the weight function}\label{Sec:weight}

We collect in this section several estimates on the weight function $\omega_{\alpha,\beta}$. As it is well-known, due to the weak form of the coagulation operator, i.e.\@ \eqref{eq:def:coag:op:weak}, a fundamental property will be \emph{subadditivity}. A function $\omega\colon \R_{*}^{d}\to [0,\infty)$ is denoted \emph{subadditive} if $\omega(x+y)\leq \omega(x)+\omega(y)$ for all $x,y\in\R_{*}^{d}$. Following \cite{Nor99}, we denote moreover $\overline{\omega}\colon (0,\infty)\to [0,\infty)$ \emph{sublinear} if $\overline{\omega}(\lambda x)\leq \lambda \overline{\omega}(x)$ for all $\lambda\geq 1$ and all $x\in(0,\infty)$ and recall the basic fact that sublinear functions are subadditive. 

\begin{lemma}\label{Lem:weight:subadditive}
 For $\alpha,\beta\leq 1$ the function $\overline{\omega}_{\alpha,\beta}$ defined in \eqref{eq:weight} is sublinear and hence $\omega_{\alpha,\beta}(\cdot)=\overline{\omega}_{\alpha,\beta}(\abs{\cdot})$ is subadditive, i.e.\@ 
 \begin{equation}\label{eq:weight:subadditive}
  \omega_{\alpha,\beta}(x+y)\leq \omega_{\alpha,\beta}(x)+\omega_{\alpha,\beta}(y) \qquad \text{for all } x,y\in\R_{*}^{d}.
 \end{equation}
 Moreover, for each $n>1$, the function $\varphi_{n}\colon \R_{*}^{d}\to [0,\infty)$ given by $\varphi_{n}(x)\vcc=\min\{\omega_{\alpha,\beta}(x),n\}$ is subadditive, bounded and satisfies $\varphi_{n}(x)\leq \omega_{\alpha,\beta}(x)$ as well as $\varphi_{n}(x)\to \omega_{\alpha,\beta}(x)$ as $n\to\infty$.
\end{lemma}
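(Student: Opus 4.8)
The plan is to reduce everything to the single statement that $\overline{\omega}_{\alpha,\beta}$ is sublinear, since the implications ``sublinear $\Rightarrow$ subadditive'' for $\overline\omega$ and hence ``$\omega_{\alpha,\beta}(x+y)=\overline\omega_{\alpha,\beta}(|x+y|)\le\overline\omega_{\alpha,\beta}(|x|+|y|)\le\overline\omega_{\alpha,\beta}(|x|)+\overline\omega_{\alpha,\beta}(|y|)$'' are exactly the recalled facts (using monotonicity of $\overline\omega_{\alpha,\beta}$ together with the triangle inequality $|x+y|\le|x|+|y|$ for the $\ell^1$-norm). So first I would establish that $\overline\omega_{\alpha,\beta}$ is nondecreasing on $(0,\infty)$: for $\alpha,\beta\le 1$ this is clear on each of the two pieces $(0,1]$ and $(1,\infty)$ where it equals $r^\alpha$ resp.\ $r^\beta$, and at the junction $r=1$ both pieces give the value $1$, so there is no downward jump; note we do not even need $\alpha,\beta\ge 0$ here, only that $\overline\omega$ is continuous and piecewise monotone with matching value $1$ at $r=1$ — actually we need $\alpha,\beta\ge 0$ for monotonicity of each branch, which I should check is consistent with the hypotheses (it is, since the lemma is applied with $\alpha,\beta\ge 0$ in the paper, but the statement as written needs $\alpha,\beta\ge0$; I would simply add that or argue monotonicity is not actually required — see below).

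Next, the core step: showing $\overline\omega_{\alpha,\beta}(\lambda r)\le\lambda\,\overline\omega_{\alpha,\beta}(r)$ for all $\lambda\ge 1$, $r>0$. I would split into cases according to the position of $r$ and $\lambda r$ relative to $1$. Case (i): $\lambda r\le 1$ (hence $r\le 1$). Then both sides use the $|x|^\alpha$ branch and the inequality becomes $(\lambda r)^\alpha\le\lambda r^\alpha$, i.e.\ $\lambda^\alpha\le\lambda$, true since $\lambda\ge 1$ and $\alpha\le 1$. Case (ii): $r>1$ (hence $\lambda r>1$): both sides use the $|x|^\beta$ branch, and $\lambda^\beta\le\lambda$ again holds since $\beta\le 1$. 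Case (iii): $r\le 1<\lambda r$, the mixed case. Here we need $(\lambda r)^\beta\le\lambda r^\alpha$. Write $\lambda r=:s>1$, so $r=s/\lambda\le 1$, and we must show $s^\beta\le\lambda (s/\lambda)^\alpha=\lambda^{1-\alpha}s^\alpha$, i.e.\ $s^{\beta-\alpha}\le\lambda^{1-\alpha}$. Since $1<s\le\lambda$ and $\beta-\alpha\le 1-\alpha$ with $1-\alpha\ge 0$ is not guaranteed... so I would instead bound via the intermediate point $1$: by Case (ii)-type reasoning $\overline\omega(\lambda r)=\overline\omega\big((\lambda r)\cdot 1\big)$, hmm — cleaner is the standard ``sublinearity is preserved under pointwise min/sup of sublinear functions'' together with writing $\overline\omega_{\alpha,\beta}=\min\{r^\alpha \text{ extended}, \dots\}$; but the most transparent route for Case (iii) is: $\overline\omega_{\alpha,\beta}(\lambda r)=(\lambda r)^\beta\le (\lambda r)^1\cdot(\lambda r)^{\beta-1}$ — no. Let me just do it directly: since $r\le 1$ we have $r^\alpha\ge r^1$ is false in general when $\alpha\le1$... actually $r\le1,\ \alpha\le1\Rightarrow r^\alpha\ge r$. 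And $(\lambda r)^\beta\le \lambda r$ iff $(\lambda r)^{\beta-1}\le 1/(\lambda r)^{0}$... since $\lambda r>1$ and $\beta\le 1$, $(\lambda r)^\beta\le\lambda r=\lambda\cdot r\le\lambda\cdot r^\alpha$. That is it: $(\lambda r)^\beta\le \lambda r$ because $\lambda r>1$ and $\beta\le 1$, and $r\le r^\alpha$ because $r\le 1$ and $\alpha\le 1$. So all three cases close with only the hypotheses $\alpha,\beta\le 1$, and monotonicity of $\overline\omega$ is not even needed for the sublinearity claim itself — only the passage from sublinearity of $\overline\omega$ to subadditivity of $\omega$ uses $|x+y|\le|x|+|y|$ plus the (obvious, since $\overline\omega$ is a supremum of two monotone power functions capped appropriately, or simply because each branch is monotone and they agree at $1$) monotonicity of $\overline\omega$.

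For the second part, I would note that $\varphi_n=\min\{\omega_{\alpha,\beta},n\}$ is bounded by $n$ by definition and by $\omega_{\alpha,\beta}$ pointwise, and $\varphi_n\uparrow\omega_{\alpha,\beta}$ as $n\to\infty$ is immediate from the definition of the minimum. Subadditivity of $\varphi_n$ follows from the elementary fact that the minimum of a subadditive function with a constant is subadditive: if $a,b\ge0$ and $a+b$ with $\min\{a,n\}+\min\{b,n\}\ge\min\{a+b,2n\}\ge\min\{a+b,n\}$ when... more simply, $\min\{a+b,n\}\le\min\{a,n\}+\min\{b,n\}$ since the right side is $\ge n$ whenever either $a\ge n$ or $b\ge n$ (as the other summand is $\ge0$), and otherwise equals $a+b\ge\min\{a+b,n\}$; apply this with $a=\omega_{\alpha,\beta}(x)$, $b=\omega_{\alpha,\beta}(y)$ together with $\omega_{\alpha,\beta}(x+y)\le\omega_{\alpha,\beta}(x)+\omega_{\alpha,\beta}(y)$ from the first part. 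I do not anticipate a genuine obstacle here; the only point requiring a little care is the mixed case (iii) of the sublinearity estimate and making sure the argument really needs nothing beyond $\alpha,\beta\le 1$ (in particular whether monotonicity of $\overline\omega$, which tacitly uses $\alpha,\beta\ge 0$, is invoked) — as sketched above it is not, which keeps the hypotheses minimal.
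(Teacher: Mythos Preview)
Your proposal is correct and follows essentially the same approach as the paper: the identical three-case split for sublinearity (the paper's mixed case is exactly your chain $(\lambda r)^\beta \leq \lambda r \leq \lambda r^\alpha$) and the same $\min\{a+b,n\}\leq\min\{a,n\}+\min\{b,n\}$ argument for $\varphi_n$. Your concern about monotonicity is moot here since on $\R_*^d$ with the $\ell^1$-norm and non-negative coordinates one has $|x+y|=|x|+|y|$ exactly, so no triangle inequality or monotonicity is needed in passing from subadditivity of $\overline\omega_{\alpha,\beta}$ to that of $\omega_{\alpha,\beta}$.
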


\begin{proof}
 We prove that $\overline{\omega}_{\alpha,\beta}$ is sublinear. Let $r\in (0,\infty)$ and $\lambda\geq 1$. If $\lambda r\leq 1$, we have in particular $r\leq 1$ and thus
 \begin{equation*}
  \overline{\omega}_{\alpha,\beta}(\lambda r)=(\lambda r)^{\alpha}\leq \lambda r^{\alpha}=\lambda \overline{\omega}_{\alpha,\beta}(r).
 \end{equation*}
 If $\lambda r>1$ and $r>1$, we get similarly
  \begin{equation*}
  \overline{\omega}_{\alpha,\beta}(\lambda r)=(\lambda r)^{\beta}\leq \lambda r^{\beta}=\lambda \overline{\omega}_{\alpha,\beta}(r).
 \end{equation*}
 On the other hand, if $\lambda r>1$ but $r\leq 1$ we have
 \begin{equation*}
  \overline{\omega}_{\alpha,\beta}(\lambda r)=(\lambda r)^{\beta}\leq \lambda r\leq \lambda r^{\alpha}=\lambda \overline{\omega}_{\alpha,\beta}(r).
 \end{equation*}
 Thus, $\overline{\omega}_{\alpha,\beta}$ is sublinear which directly yields the subadditivity of $\omega_{\alpha,\beta}$.
 
 The subadditivity of $\varphi_n$ then directly follows noting that
 \begin{multline*}
  \varphi_{n}(x+y)=\min\{\omega_{\alpha,\beta}(x+y),n\}\leq \min\{\omega_{\alpha,\beta}(x)+\omega_{\alpha,\beta}(y),n\} \\*
  \leq \min\{\omega_{\alpha,\beta}(x),n\} +\min\{\omega_{\alpha,\beta}(y),n\} =\varphi_{n}(x)+\varphi_{n}(y).
 \end{multline*}
 The bound $\varphi_{n}(x)\leq \omega_{\alpha,\beta}(x)$ and $\varphi_{n}(x)\to \omega_{\alpha,\beta}(x)$ as $n\to\infty$ are immediate consequences of the definition.
\end{proof}

The next lemma shows that we can approximate weights which are generated by sublinear functions by bounded functions which are linear near zero (see \eqref{eq:test:functions}). A similar construction has also been used in \cite{Nor99}.

\begin{lemma}\label{Lem:weight:reg:subadditive}
Let $\overline{\omega}\colon (0,\infty)\to [0,\infty)$ be sublinear and let $\eps\in(0,1)$ and $R>1$. Then the function $\varphi_{\eps,R}\colon\R_{*}^{d}\to [0,\infty)$ given by
\begin{equation*}
 \varphi_{\eps,R}(x)\vcc=\overline{\varphi}_{\eps,R}(\abs{x})\vcc=\begin{cases}
                  \eps^{-1}\overline{\omega}(\eps)\abs{x} & \text{if } \abs{x}<\eps\\
                  \min\bigl\{\overline{\omega}(\abs{x}),R\bigr\} &\text{if } \abs{x}\geq \eps
                 \end{cases}
\end{equation*}
is subadditive, $\varphi_{\eps,R}(x)\leq \overline{\omega}(\abs{x})$ and $\varphi_{\eps,R}\to \overline{\omega}(\abs{\cdot})$ as $\eps\to 0$ and $R\to \infty$.
\end{lemma}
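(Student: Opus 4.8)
The plan is to verify the three claimed properties of $\varphi_{\eps,R}$ directly from the definition, the key point being subadditivity. First I would record the elementary observation that, since $\overline{\omega}$ is sublinear it is subadditive, and moreover the ``linearised'' function $x\mapsto \eps^{-1}\overline{\omega}(\eps)\abs{x}$ dominates $\overline{\omega}$ on $(0,\eps]$: indeed for $\abs{x}\le\eps$ we have $\overline{\omega}(\eps)=\overline{\omega}\bigl((\eps/\abs{x})\abs{x}\bigr)\le(\eps/\abs{x})\overline{\omega}(\abs{x})$ by sublinearity with $\lambda=\eps/\abs{x}\ge 1$, hence $\overline{\omega}(\abs{x})\le\eps^{-1}\overline{\omega}(\eps)\abs{x}$; conversely the same computation shows $\overline{\varphi}_{\eps,R}$ is continuous at $\abs{x}=\eps$ and that $\overline{\varphi}_{\eps,R}(\abs{x})\le\overline{\omega}(\abs{x})$ for all $x$ (for $\abs x\ge\eps$ this is obvious, for $\abs x<\eps$ it is the inequality just derived, recalling $\eps^{-1}\overline\omega(\eps)\abs x\le\overline\omega(\abs x)$ — wait, the direction is the other way, so in fact one must check this carefully). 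Let me restate: on $(0,\eps]$ one has $\eps^{-1}\overline\omega(\eps)\,r \le \overline\omega(r)$ because $\overline\omega(\eps)=\overline\omega\bigl((\eps/r)r\bigr)\le (\eps/r)\overline\omega(r)$; this gives both $\varphi_{\eps,R}\le\overline\omega(\abs\cdot)$ on that range and continuity at the junction. The bound $\min\{\overline\omega,R\}\le\overline\omega$ handles $\abs x\ge\eps$. The pointwise convergence $\varphi_{\eps,R}\to\overline\omega(\abs\cdot)$ as $\eps\to0$, $R\to\infty$ is then immediate: for fixed $x$, once $\eps<\abs x$ and $R>\overline\omega(\abs x)$ we have $\varphi_{\eps,R}(x)=\overline\omega(\abs x)$.

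For subadditivity, I would split into cases according to where $\abs x$, $\abs y$, $\abs{x+y}\le\abs x+\abs y$ fall relative to $\eps$. Write $L(r)\vcc=\eps^{-1}\overline\omega(\eps)r$ for the linear branch, so $\overline\varphi_{\eps,R}=L$ on $(0,\eps)$ and $\overline\varphi_{\eps,R}=\min\{\overline\omega,R\}$ on $[\eps,\infty)$. The cleanest route is to observe that $\overline\varphi_{\eps,R}=\min\{\psi,R\}$ where $\psi(r)\vcc=L(r)$ for $r<\eps$ and $\psi(r)=\overline\omega(r)$ for $r\ge\eps$: this is valid because $R>1$ and $L(r)=\eps^{-1}\overline\omega(\eps)r<\overline\omega(\eps)$... one needs $\overline\omega(\eps)\le R$ here, which need not hold, so instead keep $\min\{\overline\omega(r),R\}$ and note that since $L(r)\le\overline\omega(r)$ on $(0,\eps)$, also $L(r)\le\min\{\overline\omega(\eps),R\}$ is not automatic — I will instead argue that $\psi$ itself is subadditive and then, as in Lemma~\ref{Lem:weight:subadditive}, truncation by $R$ preserves subadditivity, giving $\varphi_{\eps,R}(x+y)=\min\{\psi(\abs{x+y}),R\}\le\min\{\psi(\abs x)+\psi(\abs y),R\}\le\min\{\psi(\abs x),R\}+\min\{\psi(\abs y),R\}$, and finally $\min\{\psi(\abs x),R\}=\varphi_{\eps,R}(x)$ provided $\psi(\abs x)=\overline\varphi_{\eps,R}(\abs x)$ whenever the latter is $<R$ — here I must be slightly careful on the linear branch, but since $\psi=\overline\varphi_{\eps,R}$ except possibly where $\overline\varphi_{\eps,R}=R$ and there $\min\{\psi,R\}=R$ anyway, the two truncations coincide. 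So the crux reduces to: $\psi$ (equal to the linear continuation $L$ on $(0,\eps)$ and to $\overline\omega$ on $[\eps,\infty)$) is subadditive.

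To show $\psi$ subadditive, fix $r,s>0$ with $r\le s$ and set $t=r+s$. If $r,s\ge\eps$ then also $t\ge\eps$ and $\psi(t)=\overline\omega(t)\le\overline\omega(r)+\overline\omega(s)=\psi(r)+\psi(s)$ by subadditivity of $\overline\omega$. If $r,s<\eps$, then $\psi(r)+\psi(s)=L(r)+L(s)=L(t)$, and if also $t<\eps$ this equals $\psi(t)$; if $t\ge\eps$ then $\psi(t)=\overline\omega(t)$, and I need $\overline\omega(t)\le L(t)=\eps^{-1}\overline\omega(\eps)\,t$, i.e. $\overline\omega(t)/t\le\overline\omega(\eps)/\eps$, which holds because $t\ge\eps$ and sublinearity gives $\overline\omega(t)=\overline\omega\bigl((t/\eps)\eps\bigr)\le(t/\eps)\overline\omega(\eps)$. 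The mixed case $r<\eps\le s$: then $t\ge\eps$ so $\psi(t)=\overline\omega(t)$, and I want $\overline\omega(r+s)\le L(r)+\overline\omega(s)$. Since $r<\eps$, we have $L(r)\ge\overline\omega(r)$... no — on $(0,\eps)$ we showed $L(r)\le\overline\omega(r)$, the wrong direction; but here $r$ is being used as an increment, so I instead estimate $\overline\omega(r+s)\le\overline\omega(r)+\overline\omega(s)$ by subadditivity of $\overline\omega$ and then need $\overline\omega(r)\le L(r)$, which is exactly the inequality $\overline\omega(r)\le\eps^{-1}\overline\omega(\eps)r$ valid for $r\le\eps$ established at the start. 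The only genuine obstacle is bookkeeping the case split cleanly and the two opposite-looking uses of sublinearity ($\overline\omega(r)\le\eps^{-1}\overline\omega(\eps)r$ for $r\le\eps$ versus $\overline\omega(t)\le\eps^{-1}\overline\omega(\eps)t$ for $t\ge\eps$), both of which are one-line consequences of sublinearity. With $\psi$ subadditive, the argument above closes the proof.
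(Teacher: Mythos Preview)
Your approach differs from the paper's: the paper directly verifies that $\overline\varphi_{\eps,R}$ is \emph{sublinear} by a short three-case split on $r$ and $\lambda r$ relative to $\eps$, and then invokes ``sublinear $\Rightarrow$ subadditive''. You instead introduce the auxiliary function $\psi$ (linear on $(0,\eps)$, equal to $\overline\omega$ on $[\eps,\infty)$), try to show $\psi$ is subadditive, and then identify $\overline\varphi_{\eps,R}$ with $\min\{\psi,R\}$. As written, this route has two genuine errors.

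\textbf{Mixed case.} For $r<\eps\le s$ you need $\overline\omega(r+s)\le L(r)+\overline\omega(s)$. You first try $L(r)\ge\overline\omega(r)$, correctly note that on $(0,\eps]$ the established inequality is $L(r)\le\overline\omega(r)$, and then via $\overline\omega(r+s)\le\overline\omega(r)+\overline\omega(s)$ reduce to \emph{exactly the same false inequality} $\overline\omega(r)\le L(r)$ and claim it was ``established at the start''. It was not; the opposite was. A correct fix avoids $\overline\omega(r)$ altogether: by sublinearity at $s$,
\[
\overline\omega(r+s)\le\Bigl(1+\tfrac{r}{s}\Bigr)\overline\omega(s)=\overline\omega(s)+\tfrac{r}{s}\,\overline\omega(s)\le\overline\omega(s)+\tfrac{r}{\eps}\,\overline\omega(\eps)=\overline\omega(s)+L(r),
\]
the last step using $\overline\omega(s)/s\le\overline\omega(\eps)/\eps$ for $s\ge\eps$.

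\textbf{Identification with the truncation.} The claim $\overline\varphi_{\eps,R}=\min\{\psi,R\}$ can fail on $(0,\eps)$: there both functions equal $L$, but if $\overline\omega(\eps)>R$ then $L(r)>R$ for $r$ near $\eps$, so $\min\{\psi(r),R\}=R<L(r)=\overline\varphi_{\eps,R}(r)$. Your argument ``$\psi=\overline\varphi_{\eps,R}$ except where $\overline\varphi_{\eps,R}=R$'' is correct, but it only yields $\min\{\psi,R\}=\min\{\overline\varphi_{\eps,R},R\}$, not $\min\{\psi,R\}=\overline\varphi_{\eps,R}$; you have tacitly assumed $\overline\varphi_{\eps,R}\le R$ on the linear branch, which the lemma does not guarantee (and which fails e.g.\ for $\overline\omega=\overline\omega_{\alpha,\beta}$ with $\alpha<0$ and $\eps$ small). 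Subadditivity of $\min\{\psi,R\}$ therefore does not transfer to $\overline\varphi_{\eps,R}$.

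Both issues disappear if you follow the paper and prove sublinearity of $\overline\varphi_{\eps,R}$ directly; the three cases ($\lambda r<\eps$; $\lambda r\ge\eps$ with $r<\eps$; $\lambda r\ge\eps$ with $r\ge\eps$) are each one line. Your treatment of the bound $\varphi_{\eps,R}\le\overline\omega(\abs{\cdot})$ and of pointwise convergence is fine.
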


\begin{proof}
 To prove subadditivity it suffices again to prove that $\overline{\varphi}_{\eps,R}\colon (0,\infty)\to [0,\infty)$ is sublinear. Let $r\in (0,\infty)$ and $\lambda \geq 1$.
 \begin{itemize}
  \item If $\lambda r <\eps$ it holds in particular $r<\eps$ and we have $\overline{\varphi}_{\eps,R}(\lambda r)=\lambda \eps^{-1}\overline{\omega}(\eps)r=\lambda \overline{\varphi}_{\eps,R}(r)$.
  \item If $\lambda r\geq \eps$ and
  \begin{itemize}
   \item if $r<\eps$ we have $\overline{\varphi}_{\eps,R}(\lambda r)=\min\{\overline{\omega}(\lambda r),R\}\leq \overline{\omega}\bigl(\frac{\lambda r}{\eps}\eps\bigr)\leq \lambda \eps^{-1}\overline{\omega}(\eps)r=\lambda \overline{\varphi}_{\eps,R}(r),$
   \item if $r\geq \eps$ we have $\overline{\varphi}_{\eps,R}(\lambda r)=\min\{\overline{\omega}(\lambda r),R\}\leq \min\{\lambda \overline{\omega}(r),R\}\leq \lambda \overline{\varphi}_{\eps,R}(r)$.
  \end{itemize}
 \end{itemize}
 Summarising, we see that $\overline{\varphi}_{\eps,R}$ is sublinear and thus $\varphi_{\eps,R}$ is subadditive. 
 
 Moreover, if $\abs{x}\geq \eps$, we immediately have $\varphi_{\eps,R}(x)=\min\{\overline{\omega}(\abs{x}),R\}\leq \overline{\omega}(\abs{x})$. On the other hand, if $\abs{x}<\eps$, we get by sublinearity that $\varphi_{\eps,R}(x)=\frac{\abs{x}}{\eps}\overline{\omega}(\frac{\eps}{\abs{x}}\abs{x})\leq \overline{\omega}(\abs{x})$.
\end{proof}

If $\alpha>1$ or $\beta>1$, the function $\omega_{\alpha,\beta}$ is no longer subadditive but $\omega_{\alpha,\beta}(x+y)$ can still be bounded by $\omega_{\alpha,\beta}(x)+\omega_{\alpha,\beta}(y)$ up to a correcting factor.

\begin{lemma}\label{Lem:weight:1}
 For $\alpha,\beta\geq 0$ we have
 \begin{equation*}
  \omega_{\alpha,\beta}(x+y)\leq \max\{2^{\alpha},2^{\beta}\}(\omega_{\alpha,\beta}(x)+\omega_{\alpha,\beta}(y)) \qquad \text{for all } x,y\in\R_{*}^{d}.
 \end{equation*}
\end{lemma}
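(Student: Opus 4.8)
The plan is to reduce the inequality to a purely one-dimensional statement about $\overline{\omega}_{\alpha,\beta}$, exploiting that on $\R_{*}^{d}$ the $\ell^1$-norm is additive, $\abs{x+y}=\abs{x}+\abs{y}$, and that $\overline{\omega}_{\alpha,\beta}$ enjoys two elementary monotonicity and scaling properties.

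First I would record that, for $\alpha,\beta\geq 0$, the function $\overline{\omega}_{\alpha,\beta}\colon(0,\infty)\to[0,\infty)$ is non-decreasing: on $(0,1]$ it equals $r\mapsto r^{\alpha}$ and on $(1,\infty)$ it equals $r\mapsto r^{\beta}$, each non-decreasing, the two branches agreeing (with value $1$) at $r=1$ and the $\beta$-branch staying $\geq 1$ for $r>1$. In particular $\omega_{\alpha,\beta}(z)=\overline{\omega}_{\alpha,\beta}(\abs{z})$ is monotone in $\abs{z}$, so that $\overline{\omega}_{\alpha,\beta}(\max\{\abs{x},\abs{y}\})=\max\{\omega_{\alpha,\beta}(x),\omega_{\alpha,\beta}(y)\}$.

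The key step is the scaling bound
\[
 \overline{\omega}_{\alpha,\beta}(2r)\leq \max\{2^{\alpha},2^{\beta}\}\,\overline{\omega}_{\alpha,\beta}(r)\qquad\text{for all }r\in(0,\infty),
\]
which I would establish by distinguishing the cases $2r\leq 1$, $r>1$, and $\tfrac{1}{2}<r\leq 1$. In the first case $\overline{\omega}_{\alpha,\beta}(2r)=2^{\alpha}r^{\alpha}=2^{\alpha}\overline{\omega}_{\alpha,\beta}(r)$ and in the second $\overline{\omega}_{\alpha,\beta}(2r)=2^{\beta}r^{\beta}=2^{\beta}\overline{\omega}_{\alpha,\beta}(r)$, so both are immediate. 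The only slightly delicate case is $\tfrac{1}{2}<r\leq 1$, where $\overline{\omega}_{\alpha,\beta}(2r)=(2r)^{\beta}$ but $\overline{\omega}_{\alpha,\beta}(r)=r^{\alpha}$; here I would write $(2r)^{\beta}/r^{\alpha}=2^{\beta}r^{\beta-\alpha}$ and observe that if $\beta\geq\alpha$ then $r^{\beta-\alpha}\leq 1$ (since $r\leq 1$), while if $\beta<\alpha$ then $r^{\beta-\alpha}<(\tfrac{1}{2})^{\beta-\alpha}=2^{\alpha-\beta}$ (since $r>\tfrac{1}{2}$ and the exponent is negative), so in either case $(2r)^{\beta}\leq\max\{2^{\alpha},2^{\beta}\}r^{\alpha}$.

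Finally I would combine the two ingredients: for $x,y\in\R_{*}^{d}$,
\[
 \omega_{\alpha,\beta}(x+y)=\overline{\omega}_{\alpha,\beta}(\abs{x}+\abs{y})\leq\overline{\omega}_{\alpha,\beta}\bigl(2\max\{\abs{x},\abs{y}\}\bigr)\leq\max\{2^{\alpha},2^{\beta}\}\,\overline{\omega}_{\alpha,\beta}\bigl(\max\{\abs{x},\abs{y}\}\bigr),
\]
using $\abs{x}+\abs{y}\leq 2\max\{\abs{x},\abs{y}\}$ together with monotonicity of $\overline{\omega}_{\alpha,\beta}$ for the first inequality and the scaling bound for the second. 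Since the right-hand side equals $\max\{2^{\alpha},2^{\beta}\}\max\{\omega_{\alpha,\beta}(x),\omega_{\alpha,\beta}(y)\}\leq\max\{2^{\alpha},2^{\beta}\}(\omega_{\alpha,\beta}(x)+\omega_{\alpha,\beta}(y))$, this yields the claim. I expect the only genuine obstacle to be the mixed case $\tfrac{1}{2}<r\leq 1$ in the scaling bound, which is handled by the elementary exponent comparison just described; the rest is routine.
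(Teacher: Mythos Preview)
Your proof is correct and follows essentially the same approach as the paper: both reduce to the one-dimensional function $\overline{\omega}_{\alpha,\beta}$ via $\abs{x+y}=\abs{x}+\abs{y}\leq 2\max\{\abs{x},\abs{y}\}$ and then perform the same case distinction (both branches below $1$, both above $1$, and the mixed case $\tfrac{1}{2}<\max\{\abs{x},\abs{y}\}\leq 1$) with the same exponent comparison $2^{\beta}r^{\beta-\alpha}\leq\max\{2^{\alpha},2^{\beta}\}$ in the mixed case. Your packaging of the argument as ``monotonicity plus a scaling bound $\overline{\omega}_{\alpha,\beta}(2r)\leq\max\{2^{\alpha},2^{\beta}\}\overline{\omega}_{\alpha,\beta}(r)$'' is slightly more modular than the paper's direct casework, but the underlying computation is identical.
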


\begin{proof}
 If $\abs{x+y}\leq 1$ we have in particular $\abs{x},\abs{y}\leq 1$ and thus
 \begin{equation*}
  \omega_{\alpha,\beta}(x+y)=\abs{x+y}^{\alpha}=(\abs{x}+\abs{y})^{\alpha}\leq 2^{\alpha}\max\{\abs{x},\abs{y}\}^{\alpha}\leq 2^{\alpha}(\omega_{\alpha,\beta}(x)+\omega_{\alpha,\beta}(y)).
 \end{equation*}
 If $\abs{x+y}>1$ it holds $\max\{\abs{x},\abs{y}\}\geq 1/2$ and we get
 \begin{equation*}
  \omega_{\alpha,\beta}(x+y)=(\abs{x}+\abs{y})^{\beta}\leq 2^{\beta}\max\{\abs{x},\abs{y}\}^{\beta}.
 \end{equation*}
 If $\max\{\abs{x},\abs{y}\}\geq 1$ we have $\max\{\abs{x},\abs{y}\}^{\beta}\leq (\omega_{\alpha,\beta}(x)+\omega_{\alpha,\beta}(y))$. On the other hand, if $\max\{\abs{x},\abs{y}\}< 1$ we get together with $\max\{\abs{x},\abs{y}\}\geq 1/2$ that 
 \begin{equation*}
 \max\{\abs{x},\abs{y}\}^{\beta}\leq \max\{1,2^{\alpha-\beta}\}\max\{\abs{x},\abs{y}\}^{\alpha}\leq \max\{1,2^{\alpha-\beta}\}(\omega_{\alpha,\beta}(x)+\omega_{\alpha,\beta}(y)). 
 \end{equation*}
 Summarising, we obtain the claim.
\end{proof}

\section{Moment estimates}\label{Sec:moments}

In this section we collect several moment estimates for weak solutions on which we will rely on in the remainder of this work. The first lemma states that for any weak solution all sublinear moments are bounded provided the initial datum and the source have this property.

\begin{lemma}\label{Lem:small:moments:1}
 Let $\alpha,\beta\leq 1$. Each weak solution $f_t$ of \eqref{eq:mult:coag:source} with initial datum $f_0$ and source $\zeta$ such that $f_0,\zeta\in \B_{\alpha,\beta}^{+}\cap\B_{-\theta_1,1}^{+}$ satisfies the moment estimate
 \begin{equation*}
  \int_{\R_{*}^{d}}\omega_{\alpha,\beta}(x)\dd{f_{t}(x)}\leq \int_{\R_{*}^{d}}\omega_{\alpha,\beta}(x)\dd{f_{0}(x)}+t\int_{\R_{*}^{d}}\omega_{\alpha,\beta}(x)\dd{\zeta(x)} \qquad \text{for all } t\geq 0.
 \end{equation*}
 In particular, for each $a\in[-\theta_1,1]$, each weak solution $f_t$ satisfies $\int_{\R_{*}^{d}}\abs{x}^{a}\dd{f_{t}(x)}\leq \norm{f_0}_{-\theta_1,1}+t\norm{\zeta}_{-\theta_1,1}$ for all $t\geq 0$.
\end{lemma}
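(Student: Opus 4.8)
The plan is to test the weak formulation against the bounded subadditive approximations $\varphi_n(x) = \min\{\omega_{\alpha,\beta}(x), n\}$ supplied by Lemma~\ref{Lem:weight:subadditive}, pass to a differential inequality, and then remove the truncation by monotone convergence. However, there is an immediate technical obstacle: $\varphi_n$ need not lie in the admissible test class $\mathcal{T}$ (it is bounded and Lipschitz but not necessarily linear near the origin). So the first step is to replace $\varphi_n$ by the regularised version $\varphi_{\eps,R}$ from Lemma~\ref{Lem:weight:reg:subadditive}, applied to the sublinear generating function $\overline{\omega}_{\alpha,\beta}$ (which is sublinear by Lemma~\ref{Lem:weight:subadditive}, using $\alpha,\beta\leq 1$); one checks that $\varphi_{\eps,R}\in C_b^{0,1}(\R_*^d)$ is linear on $\{|x|<\eps\}$, hence $\varphi_{\eps,R}\in\mathcal{T}$.

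Next I would plug $\varphi = \varphi_{\eps,R}$ into the weak-solution identity from Definition~\ref{Def:weak:sol}. The key point is that, by subadditivity of $\varphi_{\eps,R}$, the bracket $\varphi_{\eps,R}(x+y) - \varphi_{\eps,R}(x) - \varphi_{\eps,R}(y) \leq 0$ pointwise, and since $K\geq 0$ the weak collision term satisfies $\langle Q(f_s,f_s), \varphi_{\eps,R}\rangle \leq 0$. Therefore
\begin{equation*}
 \int_{\R_*^d}\varphi_{\eps,R}(x)\dd{f_t(x)} \leq \int_{\R_*^d}\varphi_{\eps,R}(x)\dd{f_0(x)} + t\int_{\R_*^d}\varphi_{\eps,R}(x)\dd{\zeta(x)}.
\end{equation*}
Now I would let $\eps\to 0$ and $R\to\infty$. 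On the right-hand side, $\varphi_{\eps,R}(x)\leq\omega_{\alpha,\beta}(x)$ pointwise with $\varphi_{\eps,R}\to\omega_{\alpha,\beta}$, and since $f_0,\zeta\in\B_{\alpha,\beta}^+$ dominated convergence gives convergence of both integrals to $\int\omega_{\alpha,\beta}\dd{f_0}$ and $\int\omega_{\alpha,\beta}\dd{\zeta}$. On the left-hand side, $\varphi_{\eps,R}\uparrow\omega_{\alpha,\beta}$ (the convergence is monotone as $\eps\downarrow 0$ and $R\uparrow\infty$, using the bounds in Lemma~\ref{Lem:weight:reg:subadditive}), so monotone convergence yields $\int_{\R_*^d}\varphi_{\eps,R}(x)\dd{f_t(x)}\to\int_{\R_*^d}\omega_{\alpha,\beta}(x)\dd{f_t(x)}$, with the value possibly $+\infty$ a priori. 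Combining the two sides gives exactly the claimed inequality (and in particular finiteness of the moment).

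For the final assertion, fix $a\in[-\theta_1,1]$ and observe that $|x|^a = \omega_{a,a}(x)$ when $|x|\leq 1$ and $|x|^a \leq |x|$ when $|x|>1$ (since $a\leq 1$), so $|x|^a \leq \omega_{a,a}(x) + \omega_{-\theta_1,1}(x)$; more directly, $|x|^a \leq \omega_{-\theta_1,1}(x)$ holds pointwise because $a\geq -\theta_1$ forces $|x|^a\leq|x|^{-\theta_1}$ for $|x|\leq 1$ while $a\leq 1$ forces $|x|^a\leq|x|$ for $|x|>1$. Hence $\int|x|^a\dd{f_t}\leq\int\omega_{-\theta_1,1}\dd{f_t}$, and applying the just-proved estimate with $(\alpha,\beta)=(-\theta_1,1)$ bounds this by $\norm{f_0}_{-\theta_1,1} + t\norm{\zeta}_{-\theta_1,1}$. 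The main obstacle throughout is purely the justification of the limiting procedure — ensuring $\varphi_{\eps,R}$ is an admissible test function and that the monotone/dominated convergence hypotheses are genuinely met — rather than any deep analytic estimate; the sign of the collision term does all the real work.
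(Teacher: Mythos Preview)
Your proposal is correct and follows essentially the same route as the paper: both use the regularised test functions $\varphi_{\eps,R}$ from Lemma~\ref{Lem:weight:reg:subadditive} (built from the sublinear $\overline{\omega}_{\alpha,\beta}$), exploit their subadditivity to make the collision term non-positive in the weak formulation, and then pass to the limit $\eps\to 0$, $R\to\infty$ using $\varphi_{\eps,R}\leq\omega_{\alpha,\beta}$. One small remark: the convergence $\varphi_{\eps,R}\to\omega_{\alpha,\beta}$ is not obviously monotone jointly in $(\eps,R)$ for all parameter ranges of $\alpha$, but this is immaterial since Fatou's lemma on the left-hand side (combined with dominated convergence on the right) already gives the inequality; the paper does not dwell on this point either.
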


\begin{proof}
 Fix $\eps\in(0,1)$ and $R>1$ and define $\varphi_{\eps,R}$ as in Lemma~\ref{Lem:weight:reg:subadditive} (with $\overline{\omega}=\overline{\omega}_{\alpha,\beta}$) which is subadditive. From the definition of weak solutions, we obtain thus directly that
  \begin{equation*}
  \int_{\R_{*}^{d}}\varphi_{\eps,R}(x)\dd{f_{t}(x)}\leq \int_{\R_{*}^{d}}\varphi_{\eps,R}(x)\dd{f_{0}(x)}+ t\int_{\R_{*}^{d}}\varphi_{\eps,R}(x)\dd{\zeta(x)} \qquad \text{for all } t\geq 0.
 \end{equation*}
 Since $\varphi_{\eps,R}(x)\leq \omega_{\alpha,\beta}(x)$ we can pass to the limit $\eps\to 0$ and $R\to \infty$ which finishes the proof.
\end{proof}

The next lemma gives a slightly improved moment estimate for negative moments which will be used later to estimate the measure of small regions around zero.

\begin{lemma}\label{Lem:small:moments:2}
 Let $\alpha>0$ and $f_0,\zeta\in \B^{+}_{-\theta_1,1}$ such that $\int_{\R_{*}^{d}}\abs{x}^{-\alpha}\dd{f_0(x)}<\infty$ and $\int_{\R_{*}^{d}}\abs{x}^{-\alpha}\dd{\zeta(x)}<\infty$. There exists a convex function $\Phi\in C^{\infty}([0,\infty),[0,\infty))$ satisfying $\Phi(0)=\Phi'(0)=0$ as well as $\Phi'>0$ on $(0,\infty)$ and $\lim_{r\to\infty}\Phi(r)/r=\infty$ such that each weak solution $f_t$ of \eqref{eq:mult:coag:source} satisfies
 \begin{equation}\label{eq:gen:mom}
  \int_{\R_{*}^{d}}\Phi(\abs{x}^{-\alpha})\dd{f_{t}(x)}\leq \int_{\R_{*}^{d}}\Phi(\abs{x}^{-\alpha})\dd{f_{0}(x)}+t\int_{\R_{*}^{d}}\Phi(\abs{x}^{-\alpha})\dd{\zeta(x)} \qquad \text{for all }t\geq 0.
 \end{equation}
 Note that the properties of $\Phi$ imply in particular that $r\mapsto \Phi(r)/r$ is non-decreasing.
\end{lemma}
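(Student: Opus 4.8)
The argument is of de la Vallée-Poussin type, combined with the sign structure of the coagulation operator, and mirrors the truncation scheme of Lemmas~\ref{Lem:weight:subadditive} and~\ref{Lem:weight:reg:subadditive}. First I would produce the function $\Phi$. Since $\alpha>0$ and $\int_{\R_*^d}\abs{x}^{-\alpha}\dd{f_0(x)}<\infty$, $\int_{\R_*^d}\abs{x}^{-\alpha}\dd{\zeta(x)}<\infty$, the measure $f_0+\zeta$ is finite on $\R_*^d$ and $\abs{x}^{-\alpha}\in L^1(f_0+\zeta)$. Applying the de la Vallée-Poussin theorem to this function and this measure produces a convex, non-decreasing function of superlinear growth whose composition with $\abs{x}^{-\alpha}$ is $(f_0+\zeta)$-integrable; a standard smoothing and normalisation (taking $\Phi(r)=\int_0^r\phi(s)\dd{s}$ with $\phi$ smooth, non-decreasing, $\phi(0)=0$, $\phi>0$ on $(0,\infty)$ and $\phi(s)\to\infty$) turns it into a $\Phi\in C^\infty([0,\infty),[0,\infty))$ with $\Phi(0)=\Phi'(0)=0$, $\Phi'>0$ on $(0,\infty)$, $\Phi(r)/r\to\infty$ and
\begin{equation*}
 \int_{\R_*^d}\Phi(\abs{x}^{-\alpha})\dd{f_0(x)}<\infty, \qquad \int_{\R_*^d}\Phi(\abs{x}^{-\alpha})\dd{\zeta(x)}<\infty .
\end{equation*}
The claimed monotonicity of $r\mapsto\Phi(r)/r$ is then automatic from convexity together with $\Phi(0)=0$.

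The structural observation making the coagulation term behave well is that on $\R_*^d\subset[0,\infty)^d$ the $\ell^1$-norm is additive, $\abs{x+y}=\abs{x}+\abs{y}$, so that $\abs{x+y}^{-\alpha}=(\abs{x}+\abs{y})^{-\alpha}\leq\min\{\abs{x}^{-\alpha},\abs{y}^{-\alpha}\}$; since $\Phi\geq0$ is non-decreasing, the function $x\mapsto\Phi(\abs{x}^{-\alpha})$ is subadditive, and hence, by \eqref{eq:def:coag:op:weak} and $K\geq0$, $\langle Q(\mu,\mu),\Phi(\abs{\cdot}^{-\alpha})\rangle\leq0$ whenever this pairing makes sense. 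However $x\mapsto\Phi(\abs{x}^{-\alpha})$ is not in $\mathcal{T}$: it blows up and is not Lipschitz near the origin, and is not linear there. I would therefore regularise it near zero exactly as in Lemma~\ref{Lem:weight:reg:subadditive}: for $\delta\in(0,1)$ set
\begin{equation*}
 \psi_\delta(x)\vcc=\begin{cases} \dfrac{\abs{x}}{\delta}\,\Phi(\delta^{-\alpha}) & \text{if } \abs{x}<\delta,\\ \Phi(\abs{x}^{-\alpha}) & \text{if } \abs{x}\geq\delta. \end{cases}
\end{equation*}
A short case distinction (as in Lemma~\ref{Lem:weight:reg:subadditive}, using $\abs{x+y}=\abs{x}+\abs{y}$ and the monotonicity of $\Phi$) shows that $\psi_\delta$ is subadditive; moreover it is bounded by $\Phi(\delta^{-\alpha})$, Lipschitz, and linear on $\{\abs{x}<\delta\}$, hence $\psi_\delta\in\mathcal{T}$; and $0\leq\psi_\delta\leq\Phi(\abs{\cdot}^{-\alpha})$ with $\psi_\delta\uparrow\Phi(\abs{\cdot}^{-\alpha})$ pointwise as $\delta\to0$. (If one prefers, one may additionally cut $\Phi$ off at a finite height and remove the cut-off at the end, but this is not needed.)

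Finally I would insert $\psi_\delta\in\mathcal{T}$ into the weak formulation of Definition~\ref{Def:weak:sol}. Its third item guarantees that $s\mapsto\langle Q(f_s,f_s),\psi_\delta\rangle$ is continuous, hence integrable on $[0,t]$, while \eqref{eq:def:coag:op:weak} together with the subadditivity of $\psi_\delta$ and $K\geq0$ gives $\langle Q(f_s,f_s),\psi_\delta\rangle\leq0$ for every $s\geq0$. The fourth item then yields
\begin{multline*}
 \int_{\R_*^d}\psi_\delta(x)\dd{f_t(x)}\leq\int_{\R_*^d}\psi_\delta(x)\dd{f_0(x)}+t\int_{\R_*^d}\psi_\delta(x)\dd{\zeta(x)}\\
 \leq\int_{\R_*^d}\Phi(\abs{x}^{-\alpha})\dd{f_0(x)}+t\int_{\R_*^d}\Phi(\abs{x}^{-\alpha})\dd{\zeta(x)}\qquad\text{for all }t\geq0,
\end{multline*}
the last step using $\psi_\delta\leq\Phi(\abs{\cdot}^{-\alpha})$. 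Letting $\delta\to0$ and applying monotone convergence on the left-hand side gives \eqref{eq:gen:mom}. I expect the only genuinely non-routine points to be the construction of $\Phi$ with exactly the claimed regularity and normalisation (a standard smoothing/normalisation of the de la Vallée-Poussin function) and the verification that the near-origin regularisation preserves subadditivity — both entirely parallel to Lemmas~\ref{Lem:weight:subadditive} and~\ref{Lem:weight:reg:subadditive}.
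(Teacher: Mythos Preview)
Your argument is correct and follows essentially the same route as the paper: the de~la~Vall\'ee-Poussin construction of $\Phi$ is exactly what the paper obtains by citing \cite[Theorem~8]{Lau15}, and your regularisation $\psi_\delta$ coincides verbatim with the paper's $\Phi_\eps$ from Lemma~\ref{Lem:Phi:subadditive}, whose subadditivity is verified there by the same case distinction you outline. The only cosmetic difference is that the paper isolates the subadditivity of $\Phi_\eps$ as a separate lemma, while you fold it into the main argument.
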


\begin{proof}
 By the assumption $\int_{\R_{*}^{d}}\abs{x}^{-\alpha}\dd{f_0(x)}<\infty$  and $\int_{\R_{*}^{d}}\abs{x}^{-\alpha}\dd{\zeta(x)}<\infty$ we deduce that $\lim_{R\to\infty} \int_{\{x\in\R_{*}^{d}\colon\abs{x}^{-\alpha}>R\}}\abs{x}^{-\alpha}\dd{f_0(x)}=0$ and $\lim_{R\to\infty} \int_{\{x\in\R_{*}^{d}\colon\abs{x}^{-\alpha}>R\}}\abs{x}^{-\alpha}\dd{\zeta(x)}=0$. According to \cite[Theorem~8]{Lau15}, there exists a function $\Phi$ as specified in the statement such that 
 \begin{equation}\label{eq:small:moment:2:1}
  \int_{\R_{*}^{d}}\Phi(\abs{x}^{-\alpha})\dd{f_{0}(x)}<\infty \quad \text{and}\quad \int_{\R_{*}^{d}}\Phi(\abs{x}^{-\alpha})\dd{\zeta(x)}<\infty.
 \end{equation}
It thus remains to prove \eqref{eq:gen:mom}. To do so, we take for $\eps\in(0,1)$ the regularisation $\Phi_\eps$ of $\Phi$ given by \eqref{eq:Phi:reg} in the weak formulation of \eqref{eq:mult:coag}. Note that $\Phi$ is smooth and $r\mapsto r^{-\alpha}$ is decaying such that $\Phi_\eps$ defines a suitable test function. Since $\Phi_\eps$ is subadditive according to Lemma~\ref{Lem:Phi:subadditive}, this then yields
\begin{equation*}
 \int_{\R_{*}^{d}}\Phi_{\eps}(x)\dd{f_{t}(x)}\leq \int_{\R_{*}^{d}}\Phi_{\eps}(x)\dd{f_{0}(x)}+t\int_{\R_{*}^{d}}\Phi_{\eps}(x)\dd{\zeta(x)} \qquad \text{for all  } t\geq 0.
\end{equation*}
Passing to the limit $\eps\to 0$, we thus deduce from \eqref{eq:small:moment:2:1} together with $\Phi_\eps(x)\to \Phi(\abs{x}^{-\alpha})$ that
\begin{equation*}
 \int_{\R_{*}^{d}}\Phi(\abs{x}^{-\alpha})\dd{f_{t}(x)}\leq \int_{\R_{*}^{d}}\Phi(\abs{x}^{-\alpha})\dd{f_{0}(x)}+t\int_{\R_{*}^{d}}\Phi(\abs{x}^{-\alpha})\dd{\zeta(x)}\qquad \text{for all  } t\geq 0.
\end{equation*}
\end{proof}

\begin{lemma}\label{Lem:Phi:subadditive}
 Let $\Phi\in C([0,\infty),[0,\infty))$ be monotonically increasing and let $\alpha>0$. For each $\eps\in(0,1)$ the function $\Phi_\eps\colon \R_{*}^{d}\to [0,\infty)$ given by
 \begin{equation}\label{eq:Phi:reg}
  \Phi_{\eps}(x)\vcc=\begin{cases}
                      \Phi(\eps^{-\alpha})\frac{\abs{x}}{\eps} & \text{if } \abs{x}<\eps\\
                      \Phi(\abs{x}^{-\alpha}) &\text{if } \abs{x}\geq \eps
                     \end{cases}
 \end{equation}
is subadditive and satisfies $\Phi_{\eps}(x)\leq \Phi(\abs{x}^{-\alpha})$ and $\Phi_\eps(x)\to \Phi(\abs{x}^{-\alpha})$ as $\eps\to0$.
\end{lemma}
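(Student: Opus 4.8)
The plan is to reduce the statement to a property of the one-dimensional profile. Writing $\overline{\Phi}_\eps\colon(0,\infty)\to[0,\infty)$ for the function with $\Phi_\eps(x)=\overline{\Phi}_\eps(\abs{x})$, that is $\overline{\Phi}_\eps(r)=\Phi(\eps^{-\alpha})r/\eps$ for $r<\eps$ and $\overline{\Phi}_\eps(r)=\Phi(r^{-\alpha})$ for $r\geq\eps$, I first observe that since $x,y\in\R_{*}^{d}$ have non-negative components one has $\abs{x+y}=\abs{x}+\abs{y}$, so the subadditivity $\Phi_\eps(x+y)\leq\Phi_\eps(x)+\Phi_\eps(y)$ is equivalent to $\overline{\Phi}_\eps(r+s)\leq\overline{\Phi}_\eps(r)+\overline{\Phi}_\eps(s)$ for all $r,s>0$. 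In contrast to Lemma~\ref{Lem:weight:reg:subadditive}, the profile $\overline{\Phi}_\eps$ is \emph{not} sublinear --- it is non-decreasing on $(0,\eps)$ and non-increasing on $(\eps,\infty)$ --- so I would establish subadditivity directly by a short case distinction.

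Assume without loss of generality $r\leq s$. The facts I use repeatedly are that $\Phi$ is non-negative and non-decreasing, that $t\mapsto t^{-\alpha}$ is non-increasing, and that on $(0,\eps)$ the profile $\overline{\Phi}_\eps$ is linear, hence additive. If $r+s<\eps$, then $r,s<\eps$ as well and linearity gives equality. If $r+s\geq\eps$, then $\overline{\Phi}_\eps(r+s)=\Phi((r+s)^{-\alpha})$; if moreover $s\geq\eps$, then $\Phi((r+s)^{-\alpha})\leq\Phi(s^{-\alpha})=\overline{\Phi}_\eps(s)\leq\overline{\Phi}_\eps(r)+\overline{\Phi}_\eps(s)$, using $(r+s)^{-\alpha}\leq s^{-\alpha}$ and $\overline{\Phi}_\eps(r)\geq0$. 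The only remaining case is $r\leq s<\eps\leq r+s$; here $\overline{\Phi}_\eps(r)+\overline{\Phi}_\eps(s)=\Phi(\eps^{-\alpha})(r+s)/\eps\geq\Phi(\eps^{-\alpha})$ because $(r+s)/\eps\geq1$, while $\Phi((r+s)^{-\alpha})\leq\Phi(\eps^{-\alpha})$ because $(r+s)^{-\alpha}\leq\eps^{-\alpha}$, which closes this case.

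For the pointwise bound: if $\abs{x}\geq\eps$ then $\Phi_\eps(x)=\Phi(\abs{x}^{-\alpha})$ with equality, and if $\abs{x}<\eps$ then $\Phi_\eps(x)=\Phi(\eps^{-\alpha})\abs{x}/\eps\leq\Phi(\eps^{-\alpha})\leq\Phi(\abs{x}^{-\alpha})$, using $\abs{x}/\eps\leq1$ together with $\eps^{-\alpha}\leq\abs{x}^{-\alpha}$ and monotonicity of $\Phi$. Finally, for the convergence as $\eps\to0$: for a fixed $x\in\R_{*}^{d}$ one has $\abs{x}>0$, so $\Phi_\eps(x)=\Phi(\abs{x}^{-\alpha})$ already for every $\eps<\abs{x}$, and the limit is immediate. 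The only step that needs a little care is the mixed case $r\leq s<\eps\leq r+s$ in the subadditivity argument, where one must combine the linear lower bound on $\overline{\Phi}_\eps(r)+\overline{\Phi}_\eps(s)$ with the monotonicity of $\Phi$; everything else is routine.
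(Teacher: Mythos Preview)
Your proof is correct and follows essentially the same case distinction as the paper: both arguments split according to whether $\abs{x+y}<\eps$, whether the larger of $\abs{x},\abs{y}$ exceeds $\eps$, and the remaining mixed case, and both handle the pointwise bound identically. One small inaccuracy: your side remark that $\overline{\Phi}_\eps$ is not sublinear is in fact false (one checks $\overline{\Phi}_\eps(\lambda r)\leq\lambda\overline{\Phi}_\eps(r)$ for $\lambda\geq1$ by the same three-case split), but since you establish subadditivity directly this does not affect the argument.
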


\begin{proof}
 Let $x,y\in\R_{*}^{d}$. If $\abs{x+y}<\eps$ we have $\abs{x},\abs{y}<\eps$ and thus $\Phi_\eps(x+y)=\Phi_{\eps}(x)+\Phi_{\eps}(y)$ by linearity. On the other hand, if $\abs{x+y}\geq \eps$ we have either $\max\{\abs{x},\abs{y}\}\geq \eps$ or $\abs{x},\abs{y}<\eps$. In the first case, we conclude by monotonicity that
 \begin{equation*}
  \Phi_{\eps}(x+y)=\Phi(\abs{x+y}^{-\alpha})\leq \Phi(\max\{\abs{x},\abs{y}\}^{-\alpha})\leq \Phi_\eps(x)+\Phi_\eps(y).
 \end{equation*}
In the second case, i.e.\@ $\abs{x+y}\geq \eps$ and $\abs{x},\abs{y}<\eps$, we have similarly
\begin{equation*}
 \Phi_{\eps}(x+y)=\Phi(\abs{x+y}^{-\alpha}) \leq \Phi(\eps^{-\alpha})\leq \Phi(\eps^{-\alpha}) \frac{\abs{x}+\abs{y}}{\eps}=\Phi_\eps(x)+\Phi_\eps(y).
\end{equation*}
Finally, for $\abs{x}<\eps$ the monotonicity of $\Phi$ yields $\Phi_{\eps}(x)=\Phi(\eps^{-\alpha})\frac{\abs{x}}{\eps}\leq\Phi(\eps^{-\alpha})\leq \Phi(\abs{x}^{-\alpha})$.
\end{proof}

The following lemma provides an approximation of $x\mapsto \abs{x}^{k}$ for $k>1$ by suitable test functions which will be used to bound higher order moments.

\begin{lemma}\label{Lem:phi:reg}
Let $k>1$, $\eps\in(0,1)$ and $R>1$ and let $\varphi_{\eps,R}\colon \R_{*}^{d}\to [0,\infty)$ be given by
\begin{equation*}
  \varphi_{\eps,R}(x)\vcc=\begin{cases}
                           \eps^{k-1}\abs{x} &\text{if } \abs{x}<\eps\\
                           \min\{|x|,R\}^{k} & \text{if } \abs{x}\geq \eps.
                          \end{cases}
 \end{equation*}
There exists a constant $C_k>0$ such that we have for any $\mu\in[0,1]$ that
 \begin{equation*}
    \varphi_{\eps,R}(x+y)-\varphi_{\eps,R}(x)-\varphi_{\eps,R}(y)\leq C_{k}\begin{cases}
                                                                             |x|^{\mu}\min\{\abs{y},R\}^{k}|y|^{-\mu} & \text{if } |x|\leq |y|\\
                                                                             \min\{\abs{x},R\}^k|x|^{-\mu}|y|^{\mu}& \text{if } |y|\leq |x|
                                                                            \end{cases}
                                                                        \qquad \text{for all } x,y\in\R_{*}^{d}.
 \end{equation*}
 Moreover, $\varphi_{\eps,R}(x)\leq \omega_{-\theta_1,k}(x)$.
\end{lemma}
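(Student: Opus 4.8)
The plan is to reduce the estimate to a one-dimensional one and then to split into cases according to the positions of $\abs x$, $\abs y$ and $\abs{x+y}$ relative to the break-points $\eps$ and $R$. Since we use the $\ell^1$-norm and all coordinates are non-negative, $\abs{x+y}=\abs x+\abs y$, so $\varphi_{\eps,R}=\overline\varphi_{\eps,R}(\abs\cdot)$ depends on $\abs\cdot$ only. Writing $g\vcc=\overline\varphi_{\eps,R}$, $r\vcc=\abs x$, $s\vcc=\abs y$ and assuming without loss of generality that $r\le s$, the asserted inequality reads
\[
 g(r+s)-g(r)-g(s)\le C_k\,(r/s)^{\mu}\min\{s,R\}^{k}\qquad\text{for all }0<r\le s\text{ and all }\mu\in[0,1].
\]
Because $r\le s$ forces $r/s\le(r/s)^{\mu}$ for $\mu\in[0,1]$, it suffices to treat $\mu=1$, i.e.\ to bound the defect $D(r,s)\vcc=g(r+s)-g(r)-g(s)$ by $C_k\,r\,s^{-1}\min\{s,R\}^{k}$. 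It is convenient to also introduce the comparison function $\psi(t)\vcc=\min\{t,R\}^{k}$, which agrees with $g$ on $[\eps,\infty)$, satisfies $g(\eps)=\psi(\eps)=\eps^{k}$ by continuity, and obeys $g(t)\ge\psi(t)$ for every $t>0$ (for $t<\eps$ this is just $\eps^{k-1}t\ge t^{k}$).

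First I would prove the estimate for $\psi$: $\psi(r+s)-\psi(r)-\psi(s)\le k2^{k-1}\,r\,s^{-1}\min\{s,R\}^{k}$ for $0<r\le s$. If $R<s$, the left-hand side equals $-\psi(r)\le0$. If $r+s\le R$, then $\psi(r+s)-\psi(s)=\int_{s}^{r+s}kt^{k-1}\dd{t}\le kr(r+s)^{k-1}\le k2^{k-1}rs^{k-1}$ and one drops $-\psi(r)\le0$. The slightly delicate case is $s\le R<r+s$: here $\psi(r+s)-\psi(r)-\psi(s)\le R^{k}-s^{k}=\int_{s}^{R}kt^{k-1}\dd{t}\le k(R-s)R^{k-1}$, and one concludes using $R-s<r$ (which is precisely $R<r+s$) together with $R<2s$ (from $R<r+s\le2s$) to bound this by $k2^{k-1}rs^{k-1}$. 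Since $\min\{s,R\}=s$ whenever $s\le R$, this is the asserted bound in all cases.

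Next I would transfer the bound to $g$. If $s\ge\eps$, then $r+s$ and $s$ lie in $[\eps,\infty)$, so $g(r+s)=\psi(r+s)$ and $g(s)=\psi(s)$, and $g(r)\ge\psi(r)$ gives $D(r,s)\le\psi(r+s)-\psi(r)-\psi(s)$, which is controlled by the previous step. If $s<\eps$ (hence $r<\eps$) and $r+s<\eps$, then $g$ is linear on $[0,\eps)$ and $D(r,s)=0$. The remaining case $s<\eps\le r+s$ (so $\eps\le r+s<2\eps$) is the crux: here $g(r)+g(s)=\eps^{k-1}(r+s)$, while $g$ is Lipschitz on $[\eps,2\eps)$ with constant at most $k2^{k-1}\eps^{k-1}$, so $g(r+s)\le\eps^{k}+k2^{k-1}\eps^{k-1}(r+s-\eps)$; subtracting and cancelling $\eps^{k-1}(r+s)=\eps^{k}+\eps^{k-1}(r+s-\eps)$ gives $D(r,s)\le(k2^{k-1}-1)\eps^{k-1}(r+s-\eps)\le k2^{k-1}\eps^{k-1}r$, where $r+s-\eps<r$ since $s<\eps$. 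Finally $\eps\le r+s\le2s$ yields $\eps^{k-1}\le2^{k-1}s^{k-1}$, so $D(r,s)\le k4^{k-1}rs^{k-1}=k4^{k-1}rs^{-1}\min\{s,R\}^{k}$ because $s<\eps<1<R$. Thus $C_k\vcc=k4^{k-1}$ works throughout.

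For the pointwise bound $\varphi_{\eps,R}(x)\le\omega_{-\theta_1,k}(x)$: if $\abs x>1$ then $\varphi_{\eps,R}(x)=\min\{\abs x,R\}^{k}\le\abs x^{k}=\omega_{-\theta_1,k}(x)$; if $\eps\le\abs x\le1$ then $\varphi_{\eps,R}(x)=\abs x^{k}\le\abs x^{-\theta_1}$ because $\abs x\le1$ and $k>1>\theta_2\ge-\theta_1$; and if $\abs x<\eps$ one needs $\eps^{k-1}\le\abs x^{-(1+\theta_1)}$, which holds since $1+\theta_1>0$ (as $-\theta_1\le\theta_2<1$) implies $\abs x^{-(1+\theta_1)}\ge\eps^{-(1+\theta_1)}$, while $\eps<1$ together with $k-1\ge-(1+\theta_1)$ (equivalently $k\ge-\theta_1$) gives $\eps^{k-1}\le\eps^{-(1+\theta_1)}$. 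I expect the only genuine obstacle to be the case $s<\eps\le r+s$, where the leading terms cancel and the gain $r/s$ has to be extracted from a difference of nearly equal quantities; the remaining cases are elementary calculus and bookkeeping.
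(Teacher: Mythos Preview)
Your proof is correct. Both the main inequality and the pointwise bound $\varphi_{\eps,R}\le\omega_{-\theta_1,k}$ go through as written; the reduction to $\mu=1$ via $(r/s)\le(r/s)^{\mu}$ is a nice simplification, and the delicate case $s<\eps\le r+s$ is handled cleanly by the Lipschitz estimate on $[\eps,2\eps)$ together with $r+s-\eps<r$ and $\eps\le 2s$.

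The route differs from the paper's. The paper invokes the known inequality $\abs{x+y}^{k}-\abs{x}^{k}-\abs{y}^{k}\le C_{k}\abs{x}^{\mu}\abs{y}^{k-\mu}$ (cited from \cite{EsM06}) and then uses the single observation $\abs{z}^{k}\le\varphi_{\eps,R}(z)$ for $\abs{z}\le R$ to absorb the terms $\abs{x}^{k}$ and $\abs{y}^{k}$ into $\varphi_{\eps,R}(x)$ and $\varphi_{\eps,R}(y)$; after noting that the defect is nonpositive when $\max\{\abs{x},\abs{y}\}\ge R$ or $\abs{x+y}<\eps$, this gives the estimate for \emph{all} $\mu\in[0,1]$ in one stroke, with no separate treatment of the region $s<\eps\le r+s$. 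Your argument is more self-contained --- you prove the needed power inequality from scratch via the mean value theorem --- at the price of an explicit case analysis and the slightly worse constant $k4^{k-1}$. Either approach is fine for the application in Lemma~\ref{Lem:higher:moments:1}.
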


\begin{proof}
 We first recall from \cite{EsM06} that there exists $C_k>0$ such that 
  \begin{equation}\label{eq:phi:reg:1}
  |x+y|^{k}-|x|^{k}-|y|^{k}\leq C_{k}\begin{cases}
                                                                             |x|^{\mu}|y|^{k-\mu} & \text{if } |x|\leq |y|\\
                                                                             |x|^{k-\mu}|y|^{\mu}& \text{if } |y|\leq |x|
                                                                            \end{cases}
                                                                            \qquad 
                                                                            \text{for all } \mu\in[0,1].
 \end{equation}
 Moreover, if either $|x|\geq R$ or $|y|\geq R$ one immediately checks that $\varphi_{\eps,R}(x+y)-\varphi_{\eps,R}(x)-\varphi_{\eps,R}(y)\leq 0$. In the same way, if $\abs{x+y}<\eps$ we have $\varphi(x+y)-\varphi(x)-\varphi(y)\leq 0$. It thus suffices to restrict to $\abs{x+y}\geq \eps$ and $\abs{x},\abs{y}\leq R$ where we have
 \begin{equation*}
  \varphi_{\eps,R}(x+y)-\varphi_{\eps,R}(x)-\varphi_{\eps,R}(y)\leq \abs{x+y}^{k}-\varphi_{\eps,R}(x)-\varphi_{\eps,R}(y).
 \end{equation*}
Moreover, assuming $\abs{x}\leq \abs{y}$ (the case $\abs{y}\leq \abs{x}$ can be treated in the same way by symmetry) we obtain by means of \eqref{eq:phi:reg:1} that 
 \begin{equation*}
  \varphi_{\eps,R}(x+y)-\varphi_{\eps,R}(x)-\varphi_{\eps,R}(y)\leq \abs{x}^{k}+\abs{y}^{k}+C_{k}\abs{x}^{\mu}\abs{y}^{k-\mu}-\varphi_{\eps,R}(x)-\varphi_{\eps,R}(y).
 \end{equation*}
 This directly yields the first claim upon noting that $\abs{z}^{k}\leq \varphi_{\eps,R}(z)$ for $\abs{z}\leq R$ and $\abs{y}^{k}\leq \min\{\abs{y},R\}^{k}$ since $\abs{y}\leq R$.
 
 Clearly, we have $\min\{\abs{x},R\}^{k}\leq \abs{x}^{k}$ and $\eps^{k-1}\abs{x}\leq \abs{x}$ since $k>1$. Thus, if $\abs{x}\geq 1$ we immediately get $\varphi_{\eps,R}(x)\leq \omega_{-\theta_1,k}(x)$. On the other hand, if $\abs{x}\leq 1$, we note $\abs{x},\abs{x}^{k}\leq \abs{x}^{-\theta_1}$ due to $-\theta_1\leq 1$ which again gives $\varphi_{\eps,R}(x)\leq \omega_{-\theta_1,k}(x)$.
\end{proof}

The next lemma gives an estimate on superlinear moments for weak solutions provided that the initial value is bounded accordingly.

\begin{lemma}\label{Lem:higher:moments:1}
 Let $k>1$ and $f_0,\zeta\in\B_{-\theta_1,1}^{+}$ such that $\int_{\R_{*}^{d}}\abs{x}^{k}\dd{f_{0}}(x)<\infty$ and $\int_{\R_{*}^{d}}\abs{x}^{k}\dd{\zeta}(x)<\infty$. There exists a constant $\theta\in[0,1)$ and a continuous function $\psi\colon[0,\infty)\to [0,\infty)$ (which can be computed explicitly in terms of $f_0$, $\zeta$, $k$ and $\theta$) such that each weak solution $f_t$ of \eqref{eq:mult:coag:source} satisfies the estimate
 \begin{equation*}
  \int_{\R_{*}^{d}}\abs{x}^{k}\dd{f_t(x)}\leq \biggl(\biggl(\int_{\R_{*}^{d}}\abs{x}^{k}\dd{f_{0}}(x)+1\biggr)^{1-\theta}+\psi(t)\biggr)^{\frac{1}{1-\theta}}-1.
 \end{equation*}
\end{lemma}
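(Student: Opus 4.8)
The plan is to test the weak formulation against the truncated moments $\varphi_{\eps,R}$ from Lemma~\ref{Lem:phi:reg} and to derive for $Y_{\eps,R}(t)\vcc=\int_{\R_{*}^{d}}\varphi_{\eps,R}(x)\dd{f_{t}(x)}$ a Bernoulli-type differential inequality $\frac{\dd}{\dd{t}}Y_{\eps,R}\leq\chi(t)\,(1+Y_{\eps,R})^{\theta}$ with some $\theta\in[0,1)$ and a continuous $\chi$ independent of $\eps,R$; integrating this ODE and sending $\eps\to0$, $R\to\infty$ then gives the claim. Since $\abs{x}=x_{1}+\dots+x_{d}$ on $\R_{*}^{d}$, each $\varphi_{\eps,R}$ is bounded, Lipschitz and affine near the origin, hence $\varphi_{\eps,R}\in\mathcal{T}$, so Remark~\ref{Rem:weak:sol} yields $\frac{\dd}{\dd{t}}Y_{\eps,R}(t)=\langle Q(f_{t},f_{t}),\varphi_{\eps,R}\rangle+\int_{\R_{*}^{d}}\varphi_{\eps,R}\dd{\zeta}$, where the last term is $\leq\norm{\zeta}_{-\theta_{1},k}<\infty$ by the bound $\varphi_{\eps,R}\leq\omega_{-\theta_{1},k}$ of Lemma~\ref{Lem:phi:reg} together with $\zeta\in\B_{-\theta_{1},1}^{+}$ and $\int\abs{x}^{k}\dd{\zeta}<\infty$.

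For the coagulation term I would insert the pointwise estimate of Lemma~\ref{Lem:phi:reg} into \eqref{eq:def:coag:op:weak}, bound $K$ via \eqref{eq:est:kernel:2}, symmetrise (which cancels the factor $\tfrac12$) and enlarge the domain of integration, obtaining for every $\mu\in[0,1]$
\begin{equation*}
 \langle Q(f_{t},f_{t}),\varphi_{\eps,R}\rangle\leq c_{u}C_{k}\Bigl(\int_{\R_{*}^{d}}\abs{x}^{\mu-\theta_{1}}\dd{f_{t}(x)}\Bigr)\Bigl(\int_{\R_{*}^{d}}\abs{y}^{\theta_{2}-\mu}\min\{\abs{y},R\}^{k}\dd{f_{t}(y)}\Bigr).
\end{equation*}
The appropriate choice is $\mu=\mu^{*}\vcc=\min\{1,1+\theta_{1}\}\in(0,1]$ (admissible since $\theta_{1}>-1$ by \eqref{eq:est:kernel:2}): then $\mu^{*}-\theta_{1}=\min\{1-\theta_{1},1\}\in[-\theta_{1},1]$, so Lemma~\ref{Lem:small:moments:1} bounds the first factor by $A(t)\vcc=\norm{f_{0}}_{-\theta_{1},1}+t\norm{\zeta}_{-\theta_{1},1}$. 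Moreover $s\vcc=\mu^{*}-\theta_{2}>0$ (it equals $1-\theta_{2}$ if $\theta_{1}\geq0$ and $1-\gamma$ if $\theta_{1}<0$, both positive as $\theta_{2},\gamma<1$). Splitting the second factor at $\abs{y}=1$: on $\{\abs{y}\leq1\}$ it equals $\int_{\abs{y}\leq1}\abs{y}^{k-s}\dd{f_{t}}$ with $k-s\geq-\theta_{1}$ (a consequence of $k>1$ and $\theta_{1}+\theta_{2}\geq0$), hence is again $\leq A(t)$; on $\{\abs{y}>1\}$ it is $\leq\int_{\abs{y}>1}\min\{\abs{y},R\}^{k-s}\dd{f_{t}}$, which for $k-s\leq1$ is $\leq\int_{\abs{y}>1}\abs{y}\dd{f_{t}}\leq A(t)$, and for $k-s>1$ is bounded — by Jensen's inequality with the concave exponent $(k-s)/k$ applied to $f_{t}$ restricted to $\{\abs{y}>1\}$, whose mass is $\leq A(t)$ — by $A(t)^{1-\theta}Y_{\eps,R}(t)^{\theta}$ with $\theta\vcc=(k-s)/k\in(0,1)$. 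Collecting all contributions and using $(1+Y_{\eps,R})^{\theta}\geq\max\{1,Y_{\eps,R}^{\theta}\}$ gives the announced inequality $\frac{\dd}{\dd{t}}Y_{\eps,R}(t)\leq\chi(t)\,(1+Y_{\eps,R}(t))^{\theta}$ with $\theta\in[0,1)$ ($\theta=0$ in the remaining cases) and $\chi$ an explicit polynomial in $A(t)$ plus $\norm{\zeta}_{-\theta_{1},k}$, continuous and independent of $\eps,R$.

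Finally, since $1-\theta>0$, the inequality integrates to $(1+Y_{\eps,R}(t))^{1-\theta}\leq(1+Y_{\eps,R}(0))^{1-\theta}+(1-\theta)\int_{0}^{t}\chi(s)\dd{s}$; by monotone convergence $\varphi_{\eps,R}\uparrow\min\{\abs{x},R\}^{k}$ as $\eps\to0$ and $\min\{\abs{x},R\}^{k}\uparrow\abs{x}^{k}$ as $R\to\infty$, so passing to these limits (first $\eps$, then $R$) and setting $\psi(t)\vcc=(1-\theta)\int_{0}^{t}\chi(s)\dd{s}$ yields the asserted estimate, using $1+Y_{\eps,R}(0)\to1+\int\abs{x}^{k}\dd{f_{0}}$. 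The only genuine difficulty is the bookkeeping in the middle step: $\mu$ must be chosen so that the "slow" exponents $\mu^{*}-\theta_{1}$ and $k-s$ both fall into the interval $[-\theta_{1},1]$ covered by Lemma~\ref{Lem:small:moments:1}, while the "fast" part keeps only a power $\theta<1$ of the $k$-th moment — both resting on $-\theta_{1}\leq\theta_{2}<1$ and $k>1$; once $\chi$ is seen to be independent of the regularisation parameters, the passage to the limit is routine.
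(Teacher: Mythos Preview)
Your proof is correct and follows essentially the same strategy as the paper: test against $\varphi_{\eps,R}$, combine Lemma~\ref{Lem:phi:reg} with the kernel bound~\eqref{eq:est:kernel:2} to obtain a Bernoulli-type differential inequality for $Y_{\eps,R}$, integrate, and pass to the limit. The paper makes a different (case-dependent) choice of $\mu$---distinguishing $k+\theta_2\leq 1$ from $k+\theta_2>1$---and applies H\"older with exponent $\theta=1-(\mu-\theta_2)/(k-1)$ to the full integral $\int\min\{\abs{x},R\}^{k+\theta_2-\mu}\dd{f_t}$ rather than splitting at $\abs{y}=1$ as you do; both routes yield the same ODE structure, and your single choice $\mu^*=\min\{1,1+\theta_1\}$ is arguably tidier.

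One small correction: $\varphi_{\eps,R}$ \emph{decreases} to $\min\{\abs{x},R\}^{k}$ as $\eps\to 0$, not increases (for $\abs{x}<\eps$ one has $\varphi_{\eps,R}(x)=\eps^{k-1}\abs{x}\geq\abs{x}^{k}$ since $k>1$). This does not damage the argument: the pointwise inequality $\varphi_{\eps,R}\geq\min\{\abs{x},R\}^{k}$ already gives $Y_{\eps,R}(t)\geq\int\min\{\abs{x},R\}^{k}\dd{f_t}$ on the left-hand side, while on the right $Y_{\eps,R}(0)\to\int\min\{\abs{x},R\}^{k}\dd{f_0}$ by dominated convergence (using $\varphi_{\eps,R}\leq\omega_{-\theta_1,k}\in L^{1}(f_0)$ from Lemma~\ref{Lem:phi:reg}); the subsequent limit $R\to\infty$ is then genuine monotone convergence.
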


\begin{proof}
 The proof is similar to \cite[Proof of Lemma~3.4.]{EsM06} but needs some adaption due to the larger class of kernels we are considering here. For $\eps\in(0,1)$ and $R>1$, we take the test function
 \begin{equation*}
  \varphi_{\eps,R}(x)\vcc=\begin{cases}
                           \eps^{k-1}\abs{x} &\text{if } \abs{x}<\eps\\
                           \min\{|x|,R\}^{k} & \text{if } \abs{x}\geq \eps
                          \end{cases}
 \end{equation*}
 in the definition of weak solution. Combining \eqref{eq:est:kernel:2} with the estimate on $\varphi_{\eps,R}$ from Lemma~\ref{Lem:phi:reg} we get
\begin{multline*}
 K(x,y)[\varphi_{\eps,R}(x+y)-\varphi_{\eps,R}(x)-\varphi_{\eps,R}(y)]\\*
 \leq C\bigl(|x|^{\theta_2-\mu}\min\{\abs{x},R\}^{k}|y|^{\mu-\theta_1}+|x|^{\mu-\theta_1}|y|^{\theta_2-\mu}\min\{\abs{y},R\}^{k}\bigr)
\end{multline*}
for each $\mu\in[0,1]$. From Lemma~\ref{Lem:small:moments:1} together with Remark~\ref{Rem:weak:sol} we deduce that
\begin{multline}\label{eq:high:moments:1}
   \frac{\dd}{\dd{t}}\int_{\R_{*}^{d}}\varphi_{\eps,R}(x)\dd{f_t(x)}\\*
   \leq C \int_{\R_{*}^{d}}\abs{x}^{\mu-\theta_1}\dd{f_{t}(x)}\int_{\R_{*}^{d}}|x|^{\theta_2-\mu}\min\{\abs{x},R\}^{k}\dd{f_{t}(x)}+\int_{\R_{*}^{d}}\varphi_{\eps,R}(x)\dd{\zeta(x)}\\*
   \leq C\bigl(\norm{f_{0}}_{-\theta_1,1}+t\norm{\zeta}_{-\theta_1,1}\bigr)\int_{\R_{*}^{d}}|x|^{\theta_2-\mu}\min\{\abs{x},R\}^{k}\dd{f_{t}(x)}+\int_{\R_{*}^{d}}\varphi_{\eps,R}(x)\dd{\zeta(x)}
\end{multline}
if $\mu\in[0,\min\{1,1+\theta_1\}]$. To proceed, we have to distinguish the two cases $k+\theta_2\leq 1$ and $k+\theta_2>1$: 

If $k+\theta_2\leq 1$, we choose $\mu=0$ and note that $\abs{x}^{\theta_2}\min\{\abs{x},R\}^{k}\leq \omega_{-\theta,1}(x)$ due to $-\theta_1\leq \theta_2$ and $k>1$ as well as $\varphi_{\eps,R}(x)\leq \omega_{0,k}(x)$ such that \eqref{eq:high:moments:1} together with Lemma~\ref{Lem:small:moments:1} yields
\begin{equation*}
  \frac{\dd}{\dd{t}}\int_{\R_{*}^{d}}\varphi_{\eps,R}(x)\dd{f_t(x)}\leq C \bigl(\norm{f_{0}}_{-\theta_1,1}+t\norm{\zeta}_{-\theta_1,1}\bigr)^2+\norm{\zeta}_{0,k}.
\end{equation*}
Integrating this inequality, we get
\begin{multline*}
 \int_{\R_{*}^{d}}\varphi_{\eps,R}(x)\dd{f_t(x)}\\*
 \leq \int_{\R_{*}^{d}}\varphi_{\eps,R}(x)\dd{f_0(x)}+ \bigl(C \norm{f_0}_{-\theta_1,1}^2+\norm{\zeta}_{0,k}\bigr) t+\norm{\zeta}_{-\theta_1,1}\bigl(\norm{f_0}_{-\theta_1,1}t^2+\frac{1}{3}\norm{\zeta}_{-\theta_1,1}t^3\bigr).
\end{multline*}
Passing to the limit $\eps\to0$ and $R\to\infty$ we have
\begin{equation*}
 \int_{\R_{*}^{d}}\abs{x}^{k}\dd{f_t(x)}\leq \int_{\R_{*}^{d}}\abs{x}^{k}\dd{f_0(x)}+ \bigl(C \norm{f_0}_{-\theta_1,1}^2+\norm{\zeta}_{0,k}\bigr) t+\norm{\zeta}_{-\theta_1,1}\bigl(\norm{f_0}_{-\theta_1,1}t^2+\frac{1}{3}\norm{\zeta}_{-\theta_1,1}t^3\bigr).
\end{equation*}
This proves the claim in the case $k+\theta_2\leq 1$. On the other hand, if $k+\theta_2>1$, we choose
\begin{equation*}
 \mu=\theta_2+\min\Bigl\{1-\theta_2,1+\theta_1-\theta_2,\frac{\max\{0,-\theta_2\}+k-1}{2}\Bigr\}=\min\Bigl\{1,1+\theta_1,\frac{\max\{\theta_2,0\}+k-1}{2}\Bigr\}.
\end{equation*}
By means of Hölder's inequality with $\theta=1-\frac{\mu-\theta_2}{k-1}$ we get together with $\min\{\abs{x},R\}\leq \abs{x}$ and the (generalised) conservation of mass, i.e.\@ $\norm{f_t}_{1,1}=\norm{f_0}_{1,1}+t\norm{\zeta}_{1,1}$ for all $t\geq 0$, that
\begin{multline*}
 \int_{\R_{*}^{d}}\min\{|x|,R\}^{k+\theta_2-\mu}\dd{f_{t}(x)}\\*
 \leq \biggl(\int_{\R_{*}^{d}}\min\{|x|,R\}^{(k+\theta_2-\mu-(1-\theta))/\theta}\dd{f_{t}(x)}\biggr)^{\theta}\biggl(\int_{\R_{*}^{d}}\min\{|x|,R\}\dd{f_{t}(x)}\biggr)^{1-\theta}\\*
 \leq \biggl(\int_{\R_{*}^{d}}\min\{\abs{x},R\}^{k}\dd{f_{t}(x)}\biggr)^{\theta}\bigl(\norm{f_0}_{1,1}+t\norm{\zeta}_{1,1}\bigr)^{1-\theta}.
\end{multline*}
We plug this into \eqref{eq:high:moments:1} noting also that $|x|^{\theta_2-\mu}\min\{\abs{x},R\}^{k}\leq \min\{|x|,R\}^{k+\theta_2-\mu}$ since $\mu\geq \theta_2$ and $\min\{\abs{x},R\}^{k}\leq \varphi_{\eps,R}(x)$ since $k>1$ as well as $(\norm{f_0}_{1,1}+t\norm{\zeta}_{1,1})\leq (\norm{f_0}_{-\theta_1,1}+t\norm{\zeta}_{-\theta_1,1})$ which yields
\begin{equation*}
 \frac{\dd}{\dd{t}}\int_{\R_{*}^{d}}\varphi_{\eps,R}(x)\dd{f_{t}(x)}\leq C(\norm{f_0}_{-\theta_1,1}+t\norm{\zeta}_{-\theta_1,1})^{2-\theta}\biggl(\int_{\R_{*}^{d}}\varphi_{\eps,R}(x)\dd{f_{t}(x)}\biggr)^{\theta}+\int_{\R_{*}^{d}}\varphi_{\eps,R}(x)\dd{\zeta(x)}.
\end{equation*}
We estimate the right-hand side further to get
\begin{multline*}
 \frac{\dd}{\dd{t}}\int_{\R_{*}^{d}}\varphi_{\eps,R}(x)\dd{f_{t}(x)}\\*
 \leq \biggl(C(\norm{f_0}_{-\theta_1,1}+t\norm{\zeta}_{-\theta_1,1})^{2-\theta}+\int_{\R_{*}^{d}}\varphi_{\eps,R}(x)\dd{\zeta(x)}\biggr)\biggl(\int_{\R_{*}^{d}}\varphi_{\eps,R}(x)\dd{f_{t}(x)}+1\biggr)^{\theta}.
\end{multline*}
Integrating the previous inequality, we obtain
\begin{multline*}
 \int_{\R_{*}^{d}}\varphi_{\eps,R}(x)\dd{f_{t}(x)}\leq \Biggl(\biggl(\int_{\R_{*}^{d}}\varphi_{\eps,R}(x)\dd{f_{0}(x)}+1\biggr)^{1-\theta}\\*
 +C\int_{0}^{t}(\norm{f_0}_{-\theta_1,1}+s\norm{\zeta}_{-\theta_1,1})^{2-\theta}\dd{s}+t\int_{\R_{*}^{d}}\varphi_{\eps,R}(x)\dd{\zeta(x)}\Biggr)^{\frac{1}{1-\theta}}-1.
\end{multline*}
Passing to the limit $\eps\to 0$ and $R\to\infty$ finishes the proof.
\end{proof}

\section{Uniqueness}\label{Sec:uniqueness}

In this section, we will give the proof of Theorem~\ref{Thm:unique}. The approach is similar to the one for the Boltzmann equation in \cite{LuM12} and exploits that weak solutions are actually strong solutions. However some adaptations are required here since the coagulation model \eqref{eq:mult:coag:source} does not provide conservation of moments which was exploited in \cite{LuM12}. Instead, we rely on the monotonicity property for sub-linear moments as proven in Lemmas \ref{Lem:small:moments:1} and \ref{Lem:small:moments:2}.

As a preparatory step, we provide the following estimate on the strong form of the collision operator given in \eqref{eq:def:coag:op:strong} which is thus well-defined for measures having sufficiently nice moment bounds. Moreover, it  provides estimates on differences which will be exploited in Proposition~\ref{Prop:weak:strong} below.

\begin{lemma}\label{Lem:cont:coag:op:gen}
  Let $K$ satisfy \eqref{eq:est:kernel:2}. Then $Q^{\pm}\colon \B_{-\theta_1,\beta+\theta_2}(\R_{*}^{d})\times\B_{-\theta_1,\beta+\theta_2}(\R_{*}^{d}) \to \B_{\alpha,\beta}(\R_{*}^{d})$ given by \eqref{eq:def:coag:op:strong} is well-defined for each $\alpha,\beta\geq 0$, i.e.\@ $\norm{Q^{\pm}(\mu,\nu)}_{\alpha,\beta}\lesssim \norm{\mu}_{-\theta_1,\beta+\theta_2}\norm{\nu}_{-\theta_1,\beta+\theta_2}$. Moreover, we have
 \begin{equation*}
   \norm{Q^{\pm}(\mu,\mu)-Q^{\pm}(\nu,\nu)}_{\alpha,\beta}\lesssim \norm{\mu-\nu}_{-\theta_1,\beta+\theta_2}\norm{\mu+\nu}_{-\theta_1,\beta+\theta_2}
 \end{equation*}
 for each $\mu,\nu\in\B_{-\theta_1,\beta+\theta_2}$.
\end{lemma}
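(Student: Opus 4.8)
The plan is to estimate $Q^{\pm}(\mu,\nu)$ via the duality formula \eqref{eq:duality}, testing against $\varphi\in C^k_c(\R_*^d)$ with $\norm{\varphi}_{L^\infty_{-\alpha,-\beta}}\leq 1$, i.e.\@ $\abs{\varphi(x)}\leq \omega_{\alpha,\beta}(x)$. First I would treat $Q^+$. Plugging such a $\varphi$ into the first line of \eqref{eq:def:coag:op:strong} gives
\begin{equation*}
 \abs[\Big]{\int_{\R_*^d}\varphi\dd{Q^+(\mu,\nu)}}\leq \int_{(\R_*^d)^2}\omega_{\alpha,\beta}(x+y)K(x,y)\dd{\abs{\mu}(x)}\dd{\abs{\nu}(y)},
\end{equation*}
so the task reduces to a pointwise bound on $\omega_{\alpha,\beta}(x+y)K(x,y)$. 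By Lemma~\ref{Lem:weight:1} we have $\omega_{\alpha,\beta}(x+y)\lesssim \omega_{\alpha,\beta}(x)+\omega_{\alpha,\beta}(y)$, and by the kernel bound \eqref{eq:est:kernel:2}, $K(x,y)\lesssim \abs{x}^{-\theta_1}\abs{y}^{\theta_2}+\abs{x}^{\theta_2}\abs{y}^{-\theta_1}$. Multiplying out and using symmetry in $x\leftrightarrow y$, the key elementary inequality is that each of the resulting four terms is bounded by $\omega_{-\theta_1,\beta+\theta_2}(x)\,\omega_{-\theta_1,\beta+\theta_2}(y)$ up to a constant; for instance $\omega_{\alpha,\beta}(x)\abs{x}^{-\theta_1}\abs{y}^{\theta_2}\lesssim \omega_{-\theta_1,\beta+\theta_2}(x)\abs{y}^{\theta_2}$ since $\alpha\geq 0\geq -\theta_1$ handles $\abs{x}\leq 1$ and $\beta-\theta_1\leq \beta+\theta_2$ handles $\abs{x}>1$ (using $-\theta_1\leq\theta_2$), while $\abs{y}^{\theta_2}\leq\omega_{-\theta_1,\beta+\theta_2}(y)$ needs checking in the regimes $\abs{y}\lessgtr 1$ using $\theta_2<1$, $-\theta_1\leq\theta_2\leq\beta+\theta_2$. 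Collecting these, the integrand factorises and we get $\norm{Q^+(\mu,\nu)}_{\alpha,\beta}\lesssim\norm{\mu}_{-\theta_1,\beta+\theta_2}\norm{\nu}_{-\theta_1,\beta+\theta_2}$. For $Q^-$ the argument is the same but easier: the relevant weight is $\omega_{\alpha,\beta}(x)$ rather than $\omega_{\alpha,\beta}(x+y)$, and one checks $\omega_{\alpha,\beta}(x)K(x,y)\lesssim\omega_{-\theta_1,\beta+\theta_2}(x)\omega_{-\theta_1,\beta+\theta_2}(y)$ directly by the same case distinctions.

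For the difference estimate, I would use the bilinearity after polarisation: $Q^{\pm}(\mu,\mu)-Q^{\pm}(\nu,\nu)=Q^{\pm}(\mu-\nu,\mu)+Q^{\pm}(\nu,\mu-\nu)$, and since $K$ is symmetric, $Q^{\pm}(\mu,\mu)-Q^{\pm}(\nu,\nu)=\tfrac12\bigl(Q^{\pm}(\mu-\nu,\mu+\nu)+Q^{\pm}(\mu+\nu,\mu-\nu)\bigr)$ — or more simply one just notes $Q^{\pm}(\mu,\mu)-Q^{\pm}(\nu,\nu)=Q^{\pm}(\mu-\nu,\mu+\nu)$ when $Q^\pm$ is symmetrised, but to be safe I'd write it as a sum of two terms each of the form $Q^{\pm}(\mu-\nu,\,\cdot\,)$ with the second argument $\mu$ or $\nu$. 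Applying the already-established bilinear bound to each term and using $\norm{\mu}_{-\theta_1,\beta+\theta_2},\norm{\nu}_{-\theta_1,\beta+\theta_2}\leq\norm{\mu+\nu}_{-\theta_1,\beta+\theta_2}+\ldots$ — actually cleanest is $\norm{\mu}+\norm{\nu}\leq 2\max\{\norm\mu,\norm\nu\}$ and then bound by $\norm{\mu+\nu}$ — gives the claimed bound, possibly after absorbing a harmless factor into the implicit constant. If one prefers avoiding any issue with $\norm{\mu}+\norm{\nu}$ versus $\norm{\mu+\nu}$ (which need not be comparable for signed measures), the symmetrised identity $Q^{\pm}(\mu,\mu)-Q^{\pm}(\nu,\nu)=\tfrac12 Q^{\pm}(\mu-\nu,\mu+\nu)+\tfrac12 Q^{\pm}(\mu+\nu,\mu-\nu)$ is the right route, and then the bilinear estimate applies verbatim.

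The only real obstacle is the bookkeeping in the pointwise weight/kernel inequality: one must carefully verify $\omega_{\alpha,\beta}(x)\abs{x}^{-\theta_1}\lesssim\omega_{-\theta_1,\beta+\theta_2}(x)$ and $\abs{x}^{\theta_2}\lesssim\omega_{-\theta_1,\beta+\theta_2}(x)$ in the four sign/size regimes $\abs{x}\lessgtr 1$, which is where all the hypotheses $-\theta_1\leq\theta_2$, $\theta_2<1$, $\alpha,\beta\geq 0$ get used. Once those elementary inequalities are in hand, everything else is the duality characterisation \eqref{eq:duality} plus bilinearity, and the proof is short. I would also remark that the measurability/well-definedness of $Q^\pm(\mu,\nu)$ as an element of $\B_{\alpha,\beta}$ follows since \eqref{eq:def:coag:op:strong} defines a bounded linear functional on $C_0$ with weight $\omega_{\alpha,\beta}$, hence by Riesz representation a measure with finite $\norm{\cdot}_{\alpha,\beta}$, exactly as asserted just before the lemma.
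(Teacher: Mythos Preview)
Your proposal is correct and follows essentially the same route as the paper: duality via \eqref{eq:duality}, the kernel bound $K(x,y)\lesssim\abs{x}^{-\theta_1}\abs{y}^{\theta_2}+\abs{x}^{\theta_2}\abs{y}^{-\theta_1}$, Lemma~\ref{Lem:weight:1} for $\omega_{\alpha,\beta}(x+y)$, and then elementary comparisons of the resulting weights with $\omega_{-\theta_1,\beta+\theta_2}$ (the paper records these as inequalities between the intermediate norms $\norm{\cdot}_{\alpha-\theta_1,\beta-\theta_1}$, $\norm{\cdot}_{\theta_2,\theta_2}$, $\norm{\cdot}_{\alpha+\theta_2,\beta+\theta_2}$, $\norm{\cdot}_{-\theta_1,-\theta_1}$ rather than pointwise, which is purely cosmetic; note also that your aside ``$\alpha\geq 0\geq -\theta_1$'' is not quite right---only $\alpha\geq 0$ is needed and used there). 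For the difference estimate the paper simply says ``direct consequence of the bi-linearity''; your symmetrised polarisation $Q^{\pm}(\mu,\mu)-Q^{\pm}(\nu,\nu)=\tfrac12\bigl(Q^{\pm}(\mu-\nu,\mu+\nu)+Q^{\pm}(\mu+\nu,\mu-\nu)\bigr)$, valid for any bilinear form whether symmetric or not, is exactly the intended one-line argument and cleanly avoids the spurious issue with $\norm{\mu}+\norm{\nu}$ versus $\norm{\mu+\nu}$.
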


\begin{proof}
 The estimate \eqref{eq:est:kernel:2} directly implies $ K(x,y)\leq c_{u}\bigl(\abs{x}^{-\theta_1}\abs{y}^{\theta_2}+\abs{x}^{\theta_2}\abs{y}^{-\theta_1}\bigr)$. Let $\varphi\in C_{c}(\R_{*}^{d})\cap L_{-\alpha,-\beta}^{\infty}(\R_{*}^{d})$ with $\norm{\varphi}_{L_{-\alpha,-\beta}^{\infty}}\leq 1$. From \eqref{eq:def:coag:op:strong} we get
 \begin{multline*}
  \abs*{\int_{\R_{*}^{d}}\varphi(x)\dd{Q^{+}(\mu,\nu)(x)}}=\abs*{\int_{(\R_{*}^{d})^2}\varphi(x+y)K(x,y)\dd{\mu(x)}\dd{\nu(y)}}\\* \leq  c_u \int_{(\R_{*}^{d})^{2}}\omega_{\alpha,\beta}(x+y)\bigl(\abs{x}^{-\theta_1}\abs{y}^{\theta_2}+\abs{x}^{\theta_2}\abs{y}^{-\theta_1}\bigr)\dd{\abs{\mu}(x)}\dd{\abs{\nu}(y)}.
 \end{multline*}
 Together with Lemma~\ref{Lem:weight:1} we deduce
  \begin{multline*}
  \abs*{\int_{\R_{*}^{d}}\varphi(x)\dd{Q^{+}(\mu,\nu)(x)}}\\*
  \shoveleft{\leq  \max\{2^{\alpha},2^{\beta}\}c_u \int_{(\R_{*}^{d})^{2}}\omega_{\alpha,\beta}(x)\bigl(\abs{x}^{-\theta_1}\abs{y}^{\theta_2}+\abs{x}^{\theta_2}\abs{y}^{-\theta_1}\bigr)\dd{\abs{\mu}(x)}\dd{\abs{\nu}(y)}}\\*
  \shoveright{+ \max\{2^{\alpha},2^{\beta}\}c_u \int_{(\R_{*}^{d})^{2}}\omega_{\alpha,\beta}(y)\bigl(\abs{x}^{-\theta_1}\abs{y}^{\theta_2}+\abs{x}^{\theta_2}\abs{y}^{-\theta_1}\bigr)\dd{\abs{\mu}(x)}\dd{\abs{\nu}(y)}}\\
  \shoveleft{\leq \max\{2^{\alpha},2^{\beta}\}c_u \bigl(\norm{\mu}_{\alpha-\theta_1,\beta-\theta_1}\norm{\nu}_{\theta_2,\theta_2}}+\norm{\mu}_{\alpha+\theta_2,\beta+\theta_2}\norm{\nu}_{-\theta_1,-\theta_1}\\*
  \shoveright{+\norm{\mu}_{-\theta_1,-\theta_1}\norm{\nu}_{\alpha+\theta_2,\beta+\theta_2}+\norm{\mu}_{\theta_2,\theta_2}\norm{\nu}_{\alpha-\theta_1,\beta-\theta_1}\bigr)}\\*
  \leq 4\max\{2^{\alpha},2^{\beta}\}c_u \norm{\mu}_{-\theta_1,\beta+\theta_2}\norm{\nu}_{-\theta_1,\beta+\theta_2}.
 \end{multline*}
 Recalling~\eqref{eq:duality}, the first estimate on $Q^{+}$ follows. In the same way we get
\begin{multline*}
 \abs*{\int_{\R_{*}^{d}}\varphi(x)\dd{Q^{-}(\mu,\nu)(x)}}=\abs*{\int_{(\R_{*}^{d})^2}\varphi(x)K(x,y)\dd{\mu(x)}\dd{\nu(y)}}\\*
 \shoveleft{\leq c_u \int_{(\R_{*}^{d})^{2}}\omega_{\alpha,\beta}(x)\bigl(\abs{x}^{-\theta_1}\abs{y}^{\theta_2}+\abs{x}^{\theta_2}\abs{y}^{-\theta_1}\bigr)\dd{\abs{\mu}(x)}\dd{\abs{\nu}(y)}}\\*
 \leq c_u \bigl(\norm{\mu}_{\alpha-\theta_1,\beta-\theta_1}\norm{\nu}_{\theta_2,\theta_2}+\norm{\mu}_{\alpha+\theta_2,\beta+\theta_2}\norm{\nu}_{-\theta_1,-\theta_1}\bigr)\leq c_u \norm{\mu}_{-\theta_1,\beta+\theta_2}\norm{\nu}_{-\theta_1,\beta+\theta_2}.
\end{multline*}
The last estimate is a direct consequence of the bi-linearity of $Q^{\pm}$.
\end{proof}

As announced above, we can now show the key ingredient for the proof of uniqueness, namely that weak solutions are actually strong solutions.

\begin{proposition}\label{Prop:weak:strong}
 Let $K$ satisfy \eqref{eq:est:kernel:2}. Then every weak solution of \eqref{eq:mult:coag:source} is a strong solution.
\end{proposition}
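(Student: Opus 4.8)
The plan is to show that the map $t\mapsto f_t$ inherits from the weak formulation enough regularity to make sense of the strong equation $\frac{\dd}{\dd t}f_t=Q(f_t,f_t)+\zeta$ in $\B_{\max\{0,-\theta_1\},0}$, with $t\mapsto f_t$ continuous in $\B_{-\theta_1,1}$. The starting point is the observation that, by the moment estimates collected in Section~\ref{Sec:moments}, every weak solution with data in $\B^{+}_{-2\theta_1,2\theta_2}\cap\B^{+}_{-\theta_1,1}$ actually satisfies the stronger integrability $\sup_{t\in[0,T]}\norm{f_t}_{-\theta_1,\,\beta+\theta_2}<\infty$ for a suitable $\beta>0$; concretely, Lemma~\ref{Lem:small:moments:1} controls the $\abs{x}^{-\theta_1}$ part and the $\abs{x}$ part, Lemma~\ref{Lem:small:moments:2} controls negative moments near the origin (hence $\norm{f_t}_{-2\theta_1,0}$-type quantities), and Lemma~\ref{Lem:higher:moments:1} controls a superlinear moment $\abs{x}^{k}$ for some $k>1$ on any finite time interval. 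Choosing $\beta$ so that $\beta+\theta_2\le k$ and $\beta+\theta_2\le -2\theta_1$ is comfortably satisfied (this is where the hypothesis $f_0,\zeta\in\B^{+}_{-2\theta_1,2\theta_2}$ is used), Lemma~\ref{Lem:cont:coag:op:gen} then gives that $Q(f_t,f_t)\in\B_{\alpha,\beta}$ with $\norm{Q(f_t,f_t)}_{\alpha,\beta}\lesssim \norm{f_t}_{-\theta_1,\beta+\theta_2}^{2}$ uniformly on $[0,T]$, for $\alpha=\max\{0,-\theta_1\}$ and this $\beta$.

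Next I would upgrade the weak identity. By definition, for every $\varphi\in\mathcal{T}$ we have $\int\varphi\dd f_t=\int\varphi\dd f_0+\int_0^t\langle Q(f_s,f_s),\varphi\rangle\dd s+t\int\varphi\dd\zeta$, and the integrand $s\mapsto\langle Q(f_s,f_s),\varphi\rangle$ is continuous. Because $\mathcal{T}\supset C_c^{\infty}(\R_*^d)$ is dense in the predual $L^{\infty}_{-\alpha,-\beta}\cap C$ in the appropriate sense, and because the uniform bound on $\norm{Q(f_s,f_s)}_{\alpha,\beta}$ lets me extend $\varphi\mapsto\langle Q(f_s,f_s),\varphi\rangle$ to a bounded functional identifying it with integration against the measure $Q(f_s,f_s)$ from \eqref{eq:def:coag:op:strong} (this identification is exactly the content of the remark following \eqref{eq:def:coag:op:strong}, but it must now be justified for $f_s$ rather than assumed), I can rewrite the identity as $\int\varphi\dd f_t=\int\varphi\dd f_0+\int_0^t\int\varphi\dd\bigl(Q(f_s,f_s)+\zeta\bigr)\dd s$ for all $\varphi$ in that test class. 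The Bochner integral $\int_0^t\bigl(Q(f_s,f_s)+\zeta\bigr)\dd s$ is well-defined in $\B_{\alpha,\beta}$ thanks to continuity of $s\mapsto Q(f_s,f_s)$ in $\B_{\alpha,\beta}$ — which follows from the difference estimate in Lemma~\ref{Lem:cont:coag:op:gen} together with weak-$*$ continuity of $s\mapsto f_s$ and the uniform moment bounds (an interpolation/dominated-convergence argument) — so the identity says $f_t=f_0+\int_0^t\bigl(Q(f_s,f_s)+\zeta\bigr)\dd s$ as an equality in $\B_{\alpha,\beta}$. Differentiating, $t\mapsto f_t$ is $C^1$ into $\B_{\alpha,\beta}=\B_{\max\{0,-\theta_1\},0}$ with $\frac{\dd}{\dd t}f_t=Q(f_t,f_t)+\zeta$, and the integral representation also gives continuity of $t\mapsto f_t$ in $\B_{-\theta_1,1}$ once one checks $Q(f_s,f_s)+\zeta\in\B_{-\theta_1,1}$ locally uniformly (again Lemma~\ref{Lem:cont:coag:op:gen} with $\beta=1$, using $1+\theta_2\le k$). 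The mass-conservation and uniform-norm bullets of Definition~\ref{Def:strong:sol} are already part of being a weak solution.

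The main obstacle is the density/extension step: passing from test functions in $\mathcal{T}$ (bounded, Lipschitz, linear near $0$) to the full space of $\varphi\in L^{\infty}_{-\alpha,-\beta}\cap C$ needed to recover $Q(f_s,f_s)$ as a genuine measure via \eqref{eq:def:coag:op:strong}, and to justify interchanging this limit with the time integral. This requires carefully tracking that the relevant moments are finite — in particular that the $\abs{x}^{-\theta_1}$-singularity at the origin is integrable against $f_s$ (Lemma~\ref{Lem:small:moments:2}) so that $Q^{\pm}(f_s,f_s)$ does not charge neighbourhoods of $0$ pathologically — and that the superlinear-moment bound from Lemma~\ref{Lem:higher:moments:1} gives the tightness at infinity needed for the weak form of $Q$ and its strong form to agree when tested against functions with the prescribed growth. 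Everything else is a routine Bochner-integral bookkeeping exercise built on the estimates of Lemma~\ref{Lem:cont:coag:op:gen}.
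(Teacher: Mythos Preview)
Your overall architecture is sound, but two concrete points diverge from what the proposition actually asks and from what the cited lemmas provide. First, the proposition is stated for arbitrary weak solutions, so only $f_0,\zeta\in\B^{+}_{-\theta_1,1}$ is available; you are importing the stronger hypothesis $f_0,\zeta\in\B^{+}_{-2\theta_1,2\theta_2}$ from Theorem~\ref{Thm:unique}, which Proposition~\ref{Prop:weak:strong} does not assume. Second---and this is the real gap---your route to $t\mapsto f_t\in C([0,\infty),\B_{-\theta_1,1})$ via ``Lemma~\ref{Lem:cont:coag:op:gen} with $\beta=1$'' fails when $\theta_1>0$: that lemma is stated only for $\alpha\geq 0$, so it yields $Q(f_s,f_s)\in\B_{\alpha,1}$ for $\alpha\geq 0$, not for $\alpha=-\theta_1<0$. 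A direct computation bounding $\norm{Q^{-}(f_s,f_s)}_{-\theta_1,1}$ would require $\int|x|^{-2\theta_1}\dd f_s<\infty$, which Lemma~\ref{Lem:cont:coag:op:gen} does not deliver. Relatedly, the ``interpolation/dominated-convergence'' step upgrading weak-$*$ continuity of $s\mapsto f_s$ to strong continuity in $\B_{-\theta_1,\beta+\theta_2}$ (which you need in order to invoke the difference estimate and get continuity of $s\mapsto Q(f_s,f_s)$) is precisely the hard part and cannot be left as a remark.

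The paper's argument sidesteps all of this. From Lemma~\ref{Lem:small:moments:1} one has $\norm{f_t}_{-\theta_1,\theta_2}\leq\norm{f_0}_{-\theta_1,1}+t\norm{\zeta}_{-\theta_1,1}$, and Lemma~\ref{Lem:cont:coag:op:gen} with $\beta=0$ then gives, via duality and the weak formulation, the Lipschitz bound $\norm{f_{t_2}-f_{t_1}}_{\max\{0,-\theta_1\},0}\lesssim|t_2-t_1|$. The upgrade to continuity in $\B_{-\theta_1,1}$ is done by splitting into $\{|x|<r\}$, $\{r\leq|x|\leq R\}$, $\{|x|>R\}$: the middle piece is controlled by the Lipschitz bound just obtained, the large-$|x|$ piece by mass conservation together with $(f_{t_0}-f_t)^{+}\leq f_{t_0}$, and the small-$|x|$ piece (only present when $\theta_1>0$) by the de~la~Vall\'ee-Poussin function $\Phi$ of Lemma~\ref{Lem:small:moments:2}, which converts finiteness of $\int|x|^{-\theta_1}\dd f_0$ into a genuine uniform-integrability statement at the origin. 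Only after this does the difference estimate in Lemma~\ref{Lem:cont:coag:op:gen} yield strong continuity of $t\mapsto Q(f_t,f_t)$ in $\B_{\max\{0,-\theta_1\},0}$, and one concludes as in \cite{LuM12}. This tightness-by-splitting step is the key idea you are missing; it is what makes the result hold with no $-2\theta_1$ moment assumption on the data.
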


\begin{proof}
 Let $f_t$ be a weak solution according to Definition~\ref{Def:weak:sol}. From Lemma~\ref{Lem:small:moments:1} and $\omega_{-\theta_1,\theta_2}(x)\leq \omega_{-\theta_1,1}(x)$ we get for all $t\geq 0$ that
 \begin{equation}\label{eq:strong:cont:1}
   \int_{\R_{*}^{d}}\omega_{-\theta_1,\theta_2}(x)\dd{f_{t}(x)}\leq \int_{\R_{*}^{d}}\omega_{-\theta_1,\theta_2}(x)\dd{f_{0}(x)}+t\int_{\R_{*}^{d}}\omega_{-\theta_1,\theta_2}(x)\dd{\zeta(x)}\leq \norm{f_0}_{-\theta_1,1}+t\norm{\zeta}_{-\theta_1,1},
 \end{equation}
i.e.\@ $\norm{f_t}_{-\theta_1,\theta_2}\leq \norm{f_0}_{-\theta_1,1}+t\norm{\zeta}_{-\theta_1,1}$. Lemma \ref{Lem:cont:coag:op:gen} thus yields $\norm{Q^{\pm}(f_t,f_t)}_{\max\{0,-\theta_1\},0}\lesssim  (\norm{f_0}_{-\theta_1,1}+t\norm{\zeta}_{-\theta_1,1})^{2}$. In particular, the weak and the strong definition of $Q$ coincide for $f_t$ and we get from Definition~\ref{Def:weak:sol} for $\varphi\in\mathcal{T}$ with $\norm{\varphi}_{L^{\infty}_{-\max\{0,-\theta_1\},0}}\leq 1$ that
 \begin{equation*}
  \int_{\R_{*}^{d}}\varphi(x)\dd{f_t(x)}=\int_{\R_{*}^{d}}\varphi(x)\dd{f_0(x)}+\int_{0}^{t}\int_{\R_{*}^{d}} \varphi(x)\dd{Q(f_s,f_s)(x)}\dd{s}+t\int_{\R_{*}^{d}}\varphi(x)\dd{\zeta(x)}.
 \end{equation*}
 Thus, for $0\leq t_1\leq t_2<\infty$ we obtain the estimate
 \begin{multline*}
  \abs[\bigg]{\int_{\R_{*}^{d}}\varphi(x)\dd{(f_{t_2}-f_{t_1})(x)}}\leq \int_{t_1}^{t_2}\abs[\Big]{\int_{\R_{*}^{d}} \varphi(x)\dd{Q(f_s,f_s)(x)}}\dd{s}+(t_2-t_1)\int_{\R_{*}^{d}}\varphi(x)\dd{\zeta(x)}\\*
  \leq\int_{t_1}^{t_2}\norm{Q(f_s,f_s)}_{\max\{0,-\theta_1\},0}\dd{s}+(t_2-t_1)\norm{\zeta}_{\max\{0,-\theta_1\},0}\\*
  \leq C \int_{t_1}^{t_2}(\norm{f_0}_{-\theta_1,1}+s\norm{\zeta}_{-\theta_1,1})^{2}\dd{s}+(t_2-t_1)\norm{\zeta}_{\max\{0,-\theta_1\},0}\\*
  \leq \Bigl(C\bigl(\norm{f_0}_{-\theta_1,1}+(t_1+t_2)\norm{\zeta}_{-\theta_1,1}\bigr)^2+\norm{\zeta}_{\max\{0,-\theta_1\},0}\Bigr)\abs{t_2-t_1}.
 \end{multline*}
 By duality, we deduce 
 \begin{equation}\label{eq:weak:strong:2}
  \norm{f_{t_2}-f_{t_1}}_{\max\{0,-\theta_1\},0}\leq \Bigl(C\bigl(\norm{f_0}_{-\theta_1,1}+(t_1+t_2)\norm{\zeta}_{-\theta_1,1}\bigr)^2+\norm{\zeta}_{\max\{0,-\theta_1\},0}\Bigr)\abs{t_2-t_1}.
 \end{equation}
 To prove the strong continuity of $f_t$ in $\B_{-\theta_1,1}$, we first fix $t_0\in [0,\infty)$ and note that $\norm{f_{t}-f_{t_0}}_{-\theta_1,1}\leq \norm{f_{t}-f_{t_0}}_{1,1}+\norm{f_{t}-f_{t_0}}_{-\theta_1,-\theta_1}$. To proceed, we have to distinguish whether $\theta_1>0$ or $\theta_1\leq 0$. In the first case, i.e.\@ $\theta_1>0$, taking $0<r<1<R<\infty$ we split the integrals and use $\abs{f_t-f_{t_0}}=(f_t-f_{t_{0}})+2(f_{t_{0}}-f_{t})^{+}$ together with the mass conservation to get
 \begin{multline*}
  \norm{f_{t}-f_{t_0}}_{-\theta_1,1}\leq \abs{t-t_0}\int_{\R_{*}^{d}}\abs{x}\dd{\zeta(x)}+2\int_{\abs{x}\leq R}\abs{x}\dd{(f_{t_{0}}-f_{t})^{+}(x)}+2\int_{\abs{x}> R}\abs{x}\dd{(f_{t_{0}}-f_{t})^{+}(x)}\\*
  + \int_{\abs{x}< r}\abs{x}^{-\theta_1}\dd{\abs{f_{t}-f_{t_0}}(x)}+\int_{r\leq \abs{x}\leq R}\abs{x}^{-\theta_1}\dd{\abs{f_{t}-f_{t_0}}(x)}+\int_{\abs{x}>R}\abs{x}^{-\theta_1}\dd{\abs{f_{t}-f_{t_0}}(x)}.
 \end{multline*}
 Since $(f_{t_{0}}-f_{t})^{+}\leq f_{t_{0}}$, we deduce together with \eqref{eq:weak:strong:2}, Lemma~\ref{Lem:small:moments:2} and the conservation of mass that
  \begin{multline}\label{eq:weak:strong:3}
  \norm{f_{t}-f_{t_0}}_{-\theta_1,1}\leq \Bigl((2R+r^{-\theta_1})\bigl(C(\norm{f_0}_{-\theta_1,1}+(t_0+t)\norm{\zeta}_{-\theta_1,1})^2+\norm{\zeta}_{\max\{0,-\theta_1\},0}\bigr)+\norm{\zeta}_{1,1}\Bigr)\abs{t-t_0}\\*
  +2\int_{\abs{x}> R}\abs{x}\dd{f_{t_{0}}(x)}+ \frac{r^{-\theta_1}}{\Phi(r^{-\theta_1})}\int_{\abs{x}< r}\Phi(\abs{x}^{-\theta_1})\dd{\abs{f_{t}-f_{t_0}}(x)}+R^{-(1+\theta_1)}\int_{\abs{x}>R}\abs{x}\dd{\abs{f_{t}-f_{t_0}}(x)}\\*
   \leq \Bigl((2R+r^{-\theta_1})\bigl(C(\norm{f_0}_{-\theta_1,1}+(t_0+t)\norm{\zeta}_{-\theta_1,1})^2+\norm{\zeta}_{\max\{0,-\theta_1\},0}\bigr)+\norm{\zeta}_{1,1}\Bigr)\abs{t-t_0}\\*
   +2\int_{\abs{x}> R}\abs{x}\dd{f_{t_{0}}(x)} + \frac{r^{-\theta_1}}{\Phi(r^{-\theta_1})}\biggl(2\int_{\R_{*}^{d}}\Phi(\abs{x}^{-\theta_1})\dd{f_{0}(x)}+(t+t_0)\int_{\R_{*}^{d}}\Phi(\abs{x}^{-\theta_1})\dd{\zeta(x)}\biggr)\\*
   +R^{-(1+\theta_1)}\biggl(2\int_{\R_{*}^{d}}\abs{x}\dd{f_{0}(x)}+(t+t_0)´\int_{\R_{*}^{d}}\abs{x}\dd{\zeta(x)}\biggr).
 \end{multline}
 If $\theta_1\leq 0$, we can basically proceed in the same way but there is no need to consider the region $\abs{x}<r$ separately. More precisely, we use the splitting $\norm{f_{t}-f_{t_0}}_{-\theta_1,-\theta_1}=\int_{\abs{x}\leq R}\abs{x}^{-\theta_1}\dd{\abs{f_{t}-f_{t_0}}(x)}+\int_{\abs{x}>R}\abs{x}^{-\theta_1}\dd{\abs{f_{t}-f_{t_0}}(x)}$ and note that~\eqref{eq:weak:strong:2} implies 
 \begin{equation*}
  \int_{\abs{x}\leq R}\abs{x}^{-\theta_1}\dd{\abs{f_{t}-f_{t_0}}(x)}\leq (1+R^{-\theta_1})\Bigl(C\bigl(\norm{f_0}_{-\theta_1,1}+(t_0+t)\norm{\zeta}_{-\theta_1,1}\bigr)^2+\norm{\zeta}_{\max\{0,-\theta_1\},0}\Bigr)\abs{t-t_0}.
 \end{equation*}
Thus, we get in the same way as in \eqref{eq:weak:strong:3} that
  \begin{multline}\label{eq:weak:strong:4}
  \norm{f_{t}-f_{t_0}}_{-\theta_1,1}\\*
  \leq \Bigl((2R+1+R^{-\theta_1})\bigl(C(\norm{f_0}_{-\theta_1,1}+(t_0+t)\norm{\zeta}_{-\theta_1,1})^2+\norm{\zeta}_{\max\{0,-\theta_1\},0}\bigr)+\norm{\zeta}_{1,1}\Bigr)\abs{t-t_0} \\*
  +2\int_{\abs{x}> R}\abs{x}\dd{f_{t_{0}}(x)}+R^{-(1+\theta_1)}\biggl(2\int_{\R_{*}^{d}}\abs{x}\dd{f_{0}(x)}+(t+t_0)\int_{\R_{*}^{d}}\abs{x}\dd{\zeta(x)}\biggr).
 \end{multline}
Passing to the limit $t\to t_0$ in \eqref{eq:weak:strong:3} or \eqref{eq:weak:strong:4} respectively and then $R\to\infty$ if $\theta_1\leq 0$ or $r\to0$ and $R\to\infty$ if $\theta_1>0$, yields the strong continuity of $f_t$ in $\B_{-\theta_1,1}$.

Moreover, thanks to Lemma~\ref{Lem:cont:coag:op:gen} we get the strong continuity of $t\mapsto Q^{\pm}(f_t,f_t)$ in $\B_{\max\{0,-\theta_1\},0}$. From here we can proceed in the same way as in \cite[Proof of Theorem~1.5, part (a)]{LuM12} to prove that $f_t$ is a strong solution to \eqref{eq:mult:coag:source}.
\end{proof}

We are now prepared to give the proof of the main statement, i.e.\@ that weak solutions as given in Definition~\ref{Def:weak:sol} are unique.

\begin{proof}[Proof of Theorem~\ref{Thm:unique}]
 Let $f_t$ and $g_t$ be weak solutions to~\eqref{eq:mult:coag:source} with the same initial condition $f_0$. Let $\sigma_t=\sgn(f_t-g_t)$, i.e.\@ the Borel function $\sigma_t\colon \R_{*}^{d}\to \R$ such that $\abs{\sigma_t}=1$ and $\dd{\abs{f_t-g_t}}=\sigma_t\dd{(f_t-g_t)}$. According to Proposition~\ref{Prop:weak:strong}, $f_t$ and $g_t$ are strong solutions and we have
 \begin{equation*}
  \begin{split}
   f_t&=f_0+\int_{0}^{t}Q(f_s,f_s)\dd{s}+\zeta\\
   g_t&=f_0+\int_{0}^{t}Q(g_s,g_s)\dd{s}+\zeta.
  \end{split}
 \end{equation*}
Taking the difference, this yields
\begin{equation*}
 f_t-g_t=\int_{0}^{t}\bigl(Q(f_s,f_s)-Q(g_s,g_s)\bigr)\dd{s}.
\end{equation*}
Definition~\ref{Def:strong:sol} together with Lemma~\ref{Lem:cont:coag:op:gen} implies $Q(f_\cdot,f_\cdot)-Q(g_\cdot,g_\cdot)\in C([0,\infty),\B_{\max\{0,-\theta_1\},0}(\R_{*}^{d}))$. For each $n>1$ fixed, we define $\varphi_{n}(x)\vcc=\min\{n,\omega_{-\theta_1,\theta_2}(x)\}$ which is bounded. Thus, together with \cite[Lemma~5.1]{LuM12} we get
\begin{multline*}
 \int_{\R_{*}^{d}}\varphi_{n}(x)\dd{\abs{f_t-g_t}(x)}\\*
 \shoveleft{=\int_{\R_{*}^{d}}\varphi_{n}(x)\sigma_{t}(x)\dd{(f_t-g_t)(x)}=\int_{0}^{t}\int_{\R_{*}^{d}}\sigma_{s}(x)\varphi_{n}(x)\dd{(Q(f_s,f_s)-Q(g_s,g_s))(x)}\dd{s}}\\*
 =\frac{1}{2}\int_{0}^{t}\int_{(\R_{*}^d)^2}\bigl[\sigma_{s}(x+y)\varphi_{n}(x+y)-\sigma_{s}(x)\varphi_{n}(x)-\sigma_{s}(y)\varphi_{n}(y)\bigr]K(x,y)\dd{(f_s\otimes f_s-g_s\otimes g_s)}\dd{s}\\*
 =\frac{1}{2}\int_{0}^{t}\int_{(\R_{*}^d)^2}\bigl[\sigma_{s}(x+y)\varphi_{n}(x+y)-\sigma_{s}(x)\varphi_{n}(x)-\sigma_{s}(y)\varphi_{n}(y)\bigr]K(x,y)\dd{f_s(x)}\dd{(f_s-g_s)(y)}\dd{s}\\*
 +\frac{1}{2}\int_{0}^{t}\int_{(\R_{*}^d)^2}\bigl[\sigma_{s}(x+y)\varphi_{n}(x+y)-\sigma_{s}(x)\varphi_{n}(x)-\sigma_{s}(y)\varphi_{n}(y)\bigr]K(x,y)\dd{(f_s-g_s)(x)}\dd{g_s(y)}\dd{s}.
\end{multline*}
Using that $\sigma_{s}\dd{(f_s-g_s)}=\dd{\abs{f_s-g_s}}$ together with the symmetry of $K$, we deduce
\begin{multline*}
 \int_{\R_{*}^{d}}\varphi_{n}(x)\dd{\abs{f_t-g_t}(x)}\\*
 =\frac{1}{2}\int_{0}^{t}\int_{(\R_{*}^d)^2}\bigl[\sigma_{s}(x+y)\varphi_{n}(x+y)-\sigma_{s}(x)\varphi_{n}(x)\bigr]K(x,y)\dd{(f_s+g_s)(x)}\dd{(f_s-g_s)(y)}\dd{s}\\*
 -\frac{1}{2}\int_{0}^{t}\int_{(\R_{*}^d)^2}\varphi_{n}(y)K(x,y)\dd{(f_s+g_s)(x)}\dd{\abs{f_s-g_s}(y)}\dd{s}.
\end{multline*}
Since $\abs{\sigma_s \varphi_n}\leq \varphi_n$ and $\dd{(f_s-g_s)}\leq \dd{\abs{f_s-g_s}}$ we get the estimate
\begin{multline*}
 \int_{\R_{*}^{d}}\varphi_{n}(x)\dd{\abs{f_t-g_t}(x)}\\*
  \leq 
 \frac{1}{2}\int_{0}^{t}\int_{(\R_{*}^d)^2}\bigl[\varphi_{n}(x+y)+\varphi_{n}(x)-\varphi_{n}(y)\bigr]K(x,y)\dd{(f_s+g_s)(x)}\dd{\abs{f_s-g_s}(y)}\dd{s}.
\end{multline*}
Due to Lemma~\ref{Lem:weight:subadditive} we have $\varphi_{n}(x+y)-\varphi_n(y)\leq \varphi_{n}(x)$ which yields
\begin{equation*}
 \int_{\R_{*}^{d}}\varphi_{n}(x)\dd{\abs{f_t-g_t}(x)} \leq 
 \int_{0}^{t}\int_{(\R_{*}^d)^2}\varphi_{n}(x)K(x,y)\dd{(f_s+g_s)(x)}\dd{\abs{f_s-g_s}(y)}\dd{s}.
\end{equation*}
Moreover, $\varphi_{n}(x)\leq \omega_{-\theta_1,\theta_2}(x)$ and we get
\begin{equation*}
 \int_{\R_{*}^{d}}\varphi_{n}(x)\dd{\abs{f_t-g_t}(x)} \leq 
 \int_{0}^{t}\int_{(\R_{*}^d)^2}\omega_{-\theta_1,\theta_2}(x)K(x,y)\dd{(f_s+g_s)(x)}\dd{\abs{f_s-g_s}(y)}\dd{s}.
\end{equation*}
Since \eqref{eq:est:kernel:2} yields $K(x,y)\leq c_{u}(\abs{x}^{-\theta_1}\abs{y}^{\theta_2}+\abs{x}^{\theta_2}\abs{y}^{-\theta_1})$ we obtain together with $\theta_2-\theta_1=\gamma$ that
\begin{multline*}
  \int_{\R_{*}^{d}}\varphi_{n}(x)\dd{\abs{f_t-g_t}(x)}\\*
   \leq 
 c_{u}\int_{0}^{t}\int_{(\R_{*}^d)^2}\omega_{-\theta_1,\theta_2}(x)\bigl(\abs{x}^{-\theta_1}\abs{y}^{\theta_2}+\abs{x}^{\theta_2}\abs{y}^{-\theta_1}\bigr)\dd{(f_s+g_s)(x)}\dd{\abs{f_s-g_s}(y)}\dd{s}\\*
 \leq  c_{u}\int_{0}^{t}\int_{(\R_{*}^d)^2}\bigl(\omega_{-2\theta_1,\theta_2-\theta_1}(x)\abs{y}^{\theta_2}+\omega_{-\theta_1+\theta_2,2\theta_2}(x)\abs{y}^{-\theta_1}\bigr)\dd{(f_s+g_s)(x)}\dd{\abs{f_s-g_s}(y)}\dd{s}\\*
 =c_{u}\int_{0}^{t}\int_{(\R_{*}^d)^2}\bigl(\omega_{-2\theta_1,\gamma}(x)\abs{y}^{\theta_2}+\omega_{\gamma,2\theta_2}(x)\abs{y}^{-\theta_1}\bigr)\dd{(f_s+g_s)(x)}\dd{\abs{f_s-g_s}(y)}\dd{s}.
\end{multline*}
We have $-2\theta_1\leq \gamma\leq 2\theta_2$ and $-\theta_1\leq \theta_2$ which further implies
\begin{equation}\label{eq:unique:1}
 \int_{\R_{*}^{d}}\varphi_{n}(x)\dd{\abs{f_t-g_t}(x)} \leq 2c_{u}\int_{0}^{t}\int_{(\R_{*}^d)^2}\omega_{-2\theta_1,2\theta_2}(x)\omega_{-\theta_1,\theta_2}(y)\dd{(f_s+g_s)(x)}\dd{\abs{f_s-g_s}(y)}\dd{s}.
\end{equation}
If $2\theta_2\leq 1$, Lemma~\ref{Lem:small:moments:1} gives
\begin{equation*}
 \int_{\R_{*}^{d}}\varphi_{n}(x)\dd{\abs{f_t-g_t}(x)} \leq 4c_{u}\int_{0}^{t}\bigl(\norm{f_0}_{-2\theta_1,2\theta_2}+s\norm{\zeta}_{-2\theta_1,2\theta_2}\bigr)\int_{\R_{*}^d}\omega_{-\theta_1,\theta_2}(y)\dd{\abs{f_s-g_s}(y)}\dd{s}.
\end{equation*}
Grönwall's inequality then yields the claim upon passing to the limit $n\to\infty$.

If $2\theta_2> 1$, according to Lemmas \ref{Lem:small:moments:1} and \ref{Lem:higher:moments:1} there exist $\theta\in[0,1)$ and a continuous function $\psi\colon[0,\infty)\to [0,\infty)$ such that 
\begin{multline*}
    \int_{\R_{*}^d}\omega_{-2\theta_1,2\theta_2}(x)\dd{(f_s+g_s)(x)}\leq 2 \biggl(\int_{\R_{*}^{d}}\abs{x}^{-2\theta_1}\dd{f_{0}(x)}+s \int_{\R_{*}^{d}}\abs{x}^{-2\theta_1}\dd{\zeta(x)}\biggr)\\*
    + 2  \Biggl(\biggl(\biggl(\int_{\R_{*}^{d}}\abs{x}^{2\theta_2}\dd{f_{0}}(x)+1\biggr)^{1-\theta}+\psi(s)\biggr)^{\frac{1}{1-\theta}}-1\Biggr)=\vcc \Lambda(s).
\end{multline*}
Thus, from \eqref{eq:unique:1} we deduce
\begin{equation*}
 \int_{\R_{*}^{d}}\varphi_{n}(x)\dd{\abs{f_t-g_t}(x)} \leq 2c_{u}\int_{0}^{t}\Lambda(s)\int_{\R_{*}^d}\omega_{-\theta_1,\theta_2}(y)\dd{\abs{f_s-g_s}(y)}\dd{s}
\end{equation*}
and we conclude again by passing to the limit $n\to\infty$ and applying Grönwall's inequality.
\end{proof}


\begin{thebibliography}{10}

\bibitem{EKB20}
J.~Elm, J.~Kube{\v{c}}ka, V.~Besel, M.~J. J\"{a}\"{a}skel\"{a}inen, R.~Halonen,
  T.~Kurt{\'{e}}n, and H.~Vehkam\"{a}ki.
\newblock Modeling the formation and growth of atmospheric molecular clusters:
  A review.
\newblock {\em Journal of Aerosol Science}, 149:105621, Nov. 2020.

\bibitem{EsM06}
M.~Escobedo and S.~Mischler.
\newblock Dust and self-similarity for the {S}moluchowski coagulation equation.
\newblock {\em Ann. Inst. H. Poincar\'e Anal. Non Lin\'eaire}, 23(3):331--362,
  2006.

\bibitem{EMP02}
M.~Escobedo, S.~Mischler, and B.~Perthame.
\newblock Gelation in coagulation and fragmentation models.
\newblock {\em Comm. Math. Phys.}, 231(1):157--188, 2002.

\bibitem{EMR05}
M.~Escobedo, S.~Mischler, and M.~Rodriguez~Ricard.
\newblock On self-similarity and stationary problem for fragmentation and
  coagulation models.
\newblock {\em Ann. Inst. H. Poincar\'e Anal. Non Lin\'eaire}, 22(1):99--125,
  2005.

\bibitem{FLN22}
M.~A. Ferreira, J.~Lukkarinen, A.~Nota, and J.~J.~L. Velázquez.
\newblock Asymptotic localization in multicomponent mass conserving coagulation
  equations.
\newblock {\em Preprint arXiv:2203.08076}, 2022.

\bibitem{FoL06a}
N.~Fournier and P.~Lauren{\c{c}}ot.
\newblock Well-posedness of {S}moluchowski's coagulation equation for a class
  of homogeneous kernels.
\newblock {\em J. Funct. Anal.}, 233(2):351--379, 2006.

\bibitem{Lau15}
P.~Lauren\c{c}ot.
\newblock Weak compactness techniques and coagulation equations.
\newblock In {\em Evolutionary equations with applications in natural
  sciences}, volume 2126 of {\em Lecture Notes in Math.}, pages 199--253.
  Springer, Cham, 2015.

\bibitem{LaM04}
P.~Lauren{\c{c}}ot and S.~Mischler.
\newblock On coalescence equations and related models.
\newblock In {\em Modeling and computational methods for kinetic equations},
  Model. Simul. Sci. Eng. Technol., pages 321--356. Birkh\"auser Boston,
  Boston, MA, 2004.

\bibitem{LuM12}
X.~Lu and C.~Mouhot.
\newblock On measure solutions of the {B}oltzmann equation, part {I}: moment
  production and stability estimates.
\newblock {\em J. Differential Equations}, 252(4):3305--3363, 2012.

\bibitem{MeP04}
G.~Menon and R.~L. Pego.
\newblock Approach to self-similarity in {S}moluchowski's coagulation
  equations.
\newblock {\em Comm. Pure Appl. Math.}, 57(9):1197--1232, 2004.

\bibitem{Nor99}
J.~R. Norris.
\newblock Smoluchowski's coagulation equation: uniqueness, nonuniqueness and a
  hydrodynamic limit for the stochastic coalescent.
\newblock {\em Ann. Appl. Probab.}, 9(1):78--109, 1999.

\bibitem{OKO13}
T.~Olenius, O.~Kupiainen-M\"a\"att\"a, I.~K. Ortega, T.~Kurt\'en, and
  H.~Vehkam\"aki.
\newblock Free energy barrier in the growth of sulfuric acid–ammonia and
  sulfuric acid–dimethylamine clusters.
\newblock {\em The Journal of Chemical Physics}, 139(8):084312, 2013.

\bibitem{Ste90a}
I.~W. Stewart.
\newblock A uniqueness theorem for the coagulation-fragmentation equation.
\newblock {\em Math. Proc. Cambridge Philos. Soc.}, 107(3):573--578, 1990.

\end{thebibliography}
\end{document}